\newtheorem{thm}{Theorem}[section]
\newtheorem{cor}[thm]{Corollary}
\newtheorem{prop}[thm]{Proposition}
\newtheorem{lem}[thm]{Lemma}
\theoremstyle{definition}
\newtheorem{defn}[thm]{Definition}
\newtheorem{exmp}[thm]{Example}
\newtheorem{rem}[thm]{Remark}
\newtheorem*{thm1.2}{\textrm{Theorem 1.2}}
\theoremstyle{remark}
\newcommand{\Z}{\mathbb{Z}}
\newcommand{\Q}{\mathbb{Q}}
\newcommand{\C}{\mathbf{C}}
\newcommand{\R}{\mathbf{R}}
\newcommand{\Aut}{\operatorname{Aut}}
\renewcommand{\emptyset}{\varnothing}
\newcommand{\G}{\mathbf{G}}
\newcommand{\Hom}{\operatorname{Hom}}
\newcommand{\gr}{\textup{Gr}}
\newcommand{\val}{\operatorname{val}}
\newcommand{\Var}{\textup{Var}}
\newcommand{\B}{\mathbf{B}}
\newcommand{\bfH}{\mathbf{H}}
\newcommand{\simp}{\operatorname{Simp}}
\def\C{\mathbb{C}}
\def\G{\mathbf{G}}
\def\Q{\mathbb{Q}}
\def\R{\mathbb{R}}
\def\Z{\mathbb{Z}}
\def\calM{\mathcal{M}}
\def\calP{\mathcal{P}}
\def\calX{\mathcal{X}}
\newcommand*\circled[1]{\tikz[baseline=(char.base)]{\node[shape=circle,draw,inner sep=2pt, scale=0.7] (char) {#1};}}
\newcommand{{\circone}}{\circled{$1$}}
\newcommand{{\circtwo}}{\circled{$2$}}
\newcommand{{\circthree}}{\circled{$3$}}
\newcommand{{\circn}}{\circled{$n$}}
\newtheorem*{thm*}{Theorem}
\let\c@equation\c@thm
\numberwithin{equation}{section}
\title[Topology of tropical moduli spaces of weighted stable curves]{Topology of tropical moduli spaces of weighted stable curves in higher genus}
\author[S. Kannan]{Siddarth Kannan}\address{Department of Mathematics, Brown University, Providence, RI 02906}
\email{\url{siddarth_kannan@brown.edu}}
\author[S. Li]{Shiyue Li}\address{Department of Mathematics, Brown University, Providence, RI 02906}
\email{\url{shiyue_li@brown.edu}}
\author[S. Serpente]{Stefano Serpente}\address{Dipartimento di Matematica e Fisica, Università Roma Tre, Rome, I-00146}\email{\url{stefano.serpente@uniroma3.it}}
\author[C. Yun]{Claudia He Yun}\address{Department of Mathematics, Brown University, Providence, RI 02906}
\email{\url{he_yun@brown.edu}}
\begin{document}

\begin{abstract}
Given integers $g \geq 0$, $n \geq 1$, and a vector $w \in (\Q \cap (0, 1])^n$ such that ${2g - 2 + \sum w_i > 0}$, we study the topology of the moduli space $\Delta_{g, w}$ of $w$-stable tropical curves of genus $g$ with volume 1. The space $\Delta_{g, w}$ is the dual complex of the divisor of singular curves in Hassett's moduli space of $w$-stable genus $g$ curves $\overline{\mathcal{M}}_{g, w}$. When $g \geq 1$, we show that $\Delta_{g, w}$ is simply connected for all values of $w$. We also give a formula for the Euler characteristic of $\Delta_{g, w}$ in terms of the combinatorics of $w$.
\end{abstract}
	
\maketitle
\tableofcontents

\section{Introduction}
Fix integers $g \geq 0$, $n \geq 1$ and a vector of rational weights $w \in (\Q \cap (0, 1])^n$ satisfying
\begin{equation}
\label{eq:weight-genus-condition}
2g - 2 + \sum_{i = 1}^{n} w_i > 0. 
\end{equation}
We study the topology of the moduli space $\Delta_{g, w}$ of $w$-stable tropical curves of genus $g$ and volume $1$. Here a $w$-stable tropical curve is a pair $(\G, \ell)$ where $\G$ is a $w$-stable graph (see (\ref{wStability})) and $\ell: E(\G) \to \R_{\geq 0}$ is a length function; the \textit{volume} is the sum of edge lengths in $\mathbf{G}$, i.e., $\sum_{e \in E(\G)} \ell(e)$. Following the work of Ulirsch ~\cite{ulirsch14tropical} and Chan, Galatius, and Payne ~\cite{CGP1}, the space $\Delta_{g, w}$ can be realized as the dual complex of the normal crossings divisor ${\overline{\mathcal{M}}_{g, w} \smallsetminus \mathcal{M}_{g, w}}$ on Hassett's moduli stack $\overline{\mathcal{M}}_{g, w}$; here $\mathcal{M}_{g, w}$ denotes the dense open substack of $\overline{\mathcal{M}}_{g, w}$ parameterizing smooth but not necessarily distinctly marked curves. Our first main theorem is that $\Delta_{g, w}$ is simply connected for $g \geq 1$.
\begin{thm}
\label{thm:sc}
For any $g,n \geq 1$ and $w \in (\Q \cap (0, 1])^n$, the space $\Delta_{g, w}$ is simply connected.
\end{thm}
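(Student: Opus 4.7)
The plan is to transfer simple connectivity from the unweighted case $\Delta_{g,\mathbf{1}} = \Delta_{g,n}$, where $\mathbf{1} := (1,\dots,1)$ and simple connectivity for $g \geq 1$ is due to Chan--Galatius--Payne. The strategy is to produce a natural surjection $\Delta_{g,n} \twoheadrightarrow \Delta_{g,w}$ and argue that $\pi_1$ surjects through it.

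First, I would construct a continuous cellular surjection $\rho_w : \Delta_{g,\mathbf{1}} \to \Delta_{g,w}$ by tropicalizing Hassett's reduction morphism $\overline{\mathcal{M}}_{g,n} \to \overline{\mathcal{M}}_{g,w}$. Combinatorially, $\rho_w$ takes a stable tropical curve $(\G,\ell)$ and outputs its $w$-stabilization: iteratively contract every edge separating off a ``light'' subgraph whose attached marked-point weights and residual genus fail the $w$-stability inequality. Lengths are preserved on surviving edges; since contracted edges lie in a boundary face of the source cell, the volume-$1$ normalization is respected cell-wise.

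Second, I would show that every fiber of $\rho_w$ is path-connected. The preimage of a $w$-stable tropical curve $(\G',\ell')$ consists of all stable tropical curves obtained by ``blowing up'' each vertex of $\G'$ that supports multiple marked points into a metric tree attaching those points. The space of such tree expansions, with lengths constrained by volume $1$, is a contractible polytopal complex built from boundaries of $\Delta_{0,k}^{\trop}$-type pieces (one per expanded vertex) after rescaling against the retained edge lengths of $\G'$.

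Third, I would invoke the standard principle that a cellular surjection of connected CW complexes with path-connected fibers induces a surjection on fundamental groups---provable by lifting edge-loops in the target through the $1$-skeleton and closing lifts inside fibers. Combined with $\pi_1(\Delta_{g,n}) = 0$, this yields $\pi_1(\Delta_{g,w}) = 0$.

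The main obstacle is the rigorous analysis of the fibers in the second step: the volume-$1$ normalization couples extraneous edge lengths with surviving ones, so one must verify path-connectedness of the blow-up locus inside $\Delta_{g,\mathbf{1}}$ itself, not merely inside the unbounded moduli cone. A Van Kampen argument---covering $\Delta_{g,w}$ by contractible open stars of its cells, checking that each star and each pairwise intersection lifts to a connected (indeed simply connected) preimage in $\Delta_{g,n}$, and gluing---would give an alternative route that sidesteps the volume bookkeeping entirely.
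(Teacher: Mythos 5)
Your approach is genuinely different from the paper's (which shows that the $1$-skeleton of $\Delta_{g,w}$ is contained in a contractible subcomplex $\Delta_{g,w}^{\mathrm{mlw}}$, built up via the contractibility criterion of Chan--Galatius--Payne and the mapping-cone and edge-loop lemmas of Allcock--Corey--Payne), but your first step has a genuine gap: the tropicalized reduction map $\rho_w$ is not defined on all of $\Delta_{g,\mathbf{1}}$. The reduction morphism does tropicalize to a map of cone complexes $M_{g,n}^{\trop} \to M_{g,w}^{\trop}$, but some cones of the source are collapsed onto the cone point of the target, and the cone point does not lie in the link $\Delta_{g,w}$. Concretely, whenever some $S \subseteq [n]$ with $|S| \geq 2$ has $\sum_{i \in S} w_i \leq 1$ (which is exactly the case $\Delta_{g,w} \neq \Delta_{g,n}$), the stable tropical curve consisting of a weight-$g$ vertex carrying the markings outside $S$, joined by one edge to a weight-$0$ leaf carrying $S$, lies in $\Delta_{g,n}$ but its $w$-stabilization contracts every edge; such a point has no image in $\Delta_{g,w}$. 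So $\rho_w$ is only defined on the complement of this ``fully contracted'' locus, and to salvage the argument you would need to show that this complement is itself simply connected (or at least that its fundamental group dies in $\Delta_{g,w}$), which is a problem of essentially the same difficulty as the original. Your second step (path-connectedness of fibers, with the volume-$1$ rescaling) is also left unverified, though it is plausible; and note that simple connectivity of $\Delta_{g,n}$ for all $n \geq 1$, $g \geq 1$ is due to Allcock--Corey--Payne rather than Chan--Galatius--Payne, whose result gives only $(n-3)$-connectivity. Your third step is sound for cellular surjections: connecting paths inside fibers map to constant paths, so lifted edge loops project back to the original loop. The Van Kampen alternative you sketch at the end would inherit the same domain-of-definition problem, since the open stars whose cells are fully contracted have empty preimage under $\rho_w$.
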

Our second result is a calculation of the Euler characteristic of $\Delta_{g, w}$ in terms of the \textit{top weight} Euler characteristics of the moduli spaces $\calM_{g, r}$ of smooth $r$-marked algebraic curves of genus $g$; see essential background on weight filtrations and mixed Hodge structures in Section \ref{Motivation}. Set
\[[n] := \{1, \ldots, n\}. \]
We call a partition $P_1 \sqcup \cdots \sqcup P_r \vdash [n]$ \textbf{\textit{$w$-admissible}} if $\sum_{i \in P_j} w_i \leq 1$
for all $1 \leq j \leq r$. Let $N_{r, w}$ denote the number of $w$-admissible partitions of $[n]$ with $r$ parts.
\begin{thm}
\label{thm:main-euler}
Let $W = W_1 \subset \cdots \subset W_{6g-6+2r} \subseteq H^{\ast}(\calM_{g, r}, \Q)$ be the weight filtration of the rational singular cohomology of the moduli stack $\calM_{g, r}$ and denote by $\chi^{W}_{6g-6+2r}$ the Euler characteristic of the top graded piece
\[\gr^W_{6g - g + 2r}H^*(\mathcal{M}_{g, r}; \Q) = W_{6g - 6 + 2r}/W_{6g - 7 + 2r}  \]
of the weight filtration. Then
\[\chi(\Delta_{g, w}) = 1  -  \sum_{r = 1}^{n} N_{r, w} \cdot \chi^{W}_{6g - 6 + 2r} (\mathcal{M}_{g, r}).  \]
\end{thm}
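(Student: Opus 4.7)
The plan is to combine two ingredients: (i) the CGP--Payne dual complex theorem, which relates $\chi(\Delta_{g,w})$ to a top-weight Euler characteristic of $\calM_{g,w}$, and (ii) a stratification of $\calM_{g,w}$ by $w$-admissible partitions of $[n]$, together with additivity of weight-graded Euler characteristics.

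Since $\Delta_{g,w}$ is the dual complex of the simple normal crossings boundary of the smooth Deligne--Mumford stack $\overline{\calM}_{g,w}$, Payne's theorem on boundary complexes and weight filtrations, applied to $\overline{\calM}_{g,w}$ exactly as in Chan--Galatius--Payne's treatment of the unweighted case, yields a natural isomorphism
\[
\widetilde{H}_{k-1}(\Delta_{g,w}; \Q) \cong \gr^W_{6g-6+2n}\, H^{6g-6+2n-k}(\calM_{g,w}; \Q).
\]
Taking alternating sums of ranks and reindexing the cohomological degree gives
\[
\chi(\Delta_{g,w}) = 1 - \chi^W_{6g-6+2n}(\calM_{g,w}),
\]
so the task reduces to computing $\chi^W_{6g-6+2n}(\calM_{g,w})$. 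For this, I would stratify $\calM_{g,w}$ according to coincidence type of marked points: for each $w$-admissible partition $P = P_1 \sqcup \cdots \sqcup P_r \vdash [n]$, let $\calM_{g,w}^P \subset \calM_{g,w}$ be the locally closed substack of smooth $w$-stable curves whose markings collide to a single point within each block $P_j$ but remain distinct across distinct blocks. The $w$-admissibility of $P$ is precisely the condition that such configurations are $w$-stable, and collapsing each block to its coincidence point yields an isomorphism $\calM_{g,w}^P \cong \calM_{g,r}$. This exhibits a decomposition $\calM_{g,w} = \bigsqcup_P \calM_{g,w}^P$ into $\sum_r N_{r,w}$ locally closed strata, one for each $w$-admissible partition.

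Finally, I would convert the non-additive top-weight invariants into additive ones using Poincar\'e duality: for smooth Deligne--Mumford $X$ of dimension $d$, duality gives $\gr^W_{2d}\, H^i(X;\Q) \cong (\gr^W_0\, H^{2d-i}_c(X;\Q))^\vee$, and hence $\chi^W_{2d}(X) = \chi^{W,c}_0(X)$, where the superscript $c$ denotes the analogous invariant formed with compactly supported cohomology. The invariants $\chi^{W,c}_0$ are additive on locally closed stratifications: this follows from the long exact sequence of compactly supported cohomology combined with strictness of morphisms of mixed Hodge structures. Applying this to the stratification above,
\[
\chi^W_{6g-6+2n}(\calM_{g,w}) = \chi^{W,c}_0(\calM_{g,w}) = \sum_{P} \chi^{W,c}_0(\calM_{g,r(P)}) = \sum_{r=1}^{n} N_{r,w}\, \chi^W_{6g-6+2r}(\calM_{g,r}),
\]
completing the proof. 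The main technical point I anticipate is the stack-level identification $\calM_{g,w}^P \cong \calM_{g,r}$, including the verifications that the stratification is exhaustive and disjoint and that no automorphism subtleties arise from merging distinctly labeled markings; everything else is standard mixed Hodge machinery once the CGP--Payne dual complex input for the weighted setting is granted.
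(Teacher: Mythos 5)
Your proposal is correct and follows essentially the same route as the paper: the same stratification of $\calM_{g,w}$ by $w$-admissible partitions with strata isomorphic to $\calM_{g,r}$, the same Poincar\'e-duality conversion of the top-weight Euler characteristic into the weight-zero compactly supported one, and the same dual-complex input giving $\chi(\Delta_{g,w}) = 1 - \chi^{W}_{6g-6+2n}(\calM_{g,w})$. The only cosmetic difference is that the paper packages the additivity step via the Grothendieck group of varieties and the virtual Poincar\'e polynomial (passing to coarse moduli spaces via the stack-to-coarse-space cohomology comparison), whereas you invoke the compactly supported long exact sequence and strictness directly.
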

    
    A generating function for the numbers $\chi^{W}_{6g - 6 + 2r} (\mathcal{M}_{g, r})$ is given in \cite{CFGP}; together with their result, Theorem \ref{thm:main-euler} allows for the computer-aided calculation of $\chi(\Delta_{g, w})$ for arbitrary $g$ and $w$. In \cite[Corollary 8.1]{CFGP}, the authors give a closed form in the case when the number of marked points is large. For $r > g + 1$, 
        \[
        \chi^{W}_{6g - 6 + 2r}(\calM_{g, r}) = (-1)^{r+1} \frac{(g + r - 2)!}{g!} B_g,
        \] where $B_g$ is the $g$-th Bernoulli number, characterized by 
        \[
        \frac{t}{e^{t}-1} = \sum_{\ell=0}^{\infty} B_\ell \frac{t^{\ell}}{\ell!}.
        \] Substituting into Theorem \ref{thm:main-euler} yields the following closed form. 
    \begin{cor}
    \label{cor:euler-bernoulli}
        Given $g \ge 0$ and a weight vector $w$ satisfying Equation \ref{eq:weight-genus-condition} and that $N_{r, w} = 0$ for $r \le g+ 1$,
        the Euler characteristic of $\Delta_{g, w}$ is
        \begin{align*}
         \chi(\Delta_{g, w}) &= 1 + \sum_{r = 1}^{n} N_{r, w} (-1)^{r} \frac{(g + r - 2)!}{g!} B_g.
        \end{align*}
        
    \end{cor}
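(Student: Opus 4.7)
The plan is to derive the corollary by direct substitution into Theorem \ref{thm:main-euler}, using the hypothesis that $N_{r,w} = 0$ for $r \leq g+1$ to ensure that only the large-$r$ regime contributes.

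First I would start from the identity
\[\chi(\Delta_{g, w}) = 1 - \sum_{r = 1}^{n} N_{r, w} \cdot \chi^{W}_{6g - 6 + 2r} (\mathcal{M}_{g, r}) \]
supplied by Theorem \ref{thm:main-euler}. Because $N_{r,w} = 0$ for every $r \leq g+1$ by hypothesis, the only potentially nonzero terms in the sum are those with $r \geq g+2$, i.e., with $r > g+1$. For such $r$, the closed form quoted from \cite[Corollary 8.1]{CFGP} applies and gives
\[\chi^{W}_{6g - 6 + 2r}(\calM_{g, r}) = (-1)^{r+1} \frac{(g + r - 2)!}{g!} B_g. \]

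Substituting this expression into the above sum, the sign $-(-1)^{r+1} = (-1)^{r}$ appears, yielding
\[\chi(\Delta_{g,w}) = 1 + \sum_{r = g+2}^{n} N_{r, w} (-1)^{r} \frac{(g + r - 2)!}{g!} B_g. \]
Since the terms with $1 \leq r \leq g+1$ all vanish by the assumption $N_{r,w} = 0$, one may reinsert them without changing the value of the sum, producing the stated form
\[\chi(\Delta_{g, w}) = 1 + \sum_{r = 1}^{n} N_{r, w} (-1)^{r} \frac{(g + r - 2)!}{g!} B_g. \]

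There is essentially no obstacle here; the corollary is a purely formal consequence of Theorem \ref{thm:main-euler} and the CFGP closed-form evaluation of $\chi^{W}_{6g-6+2r}(\mathcal{M}_{g,r})$ in the stable range. The only point that needs to be checked carefully is that the hypothesis $N_{r,w} = 0$ for $r \leq g+1$ is exactly what is required to restrict the summation to the range $r > g+1$ where the Bernoulli number formula is valid.
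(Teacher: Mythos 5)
Your proposal is correct and is exactly the argument the paper intends: the corollary is stated as an immediate substitution of the CFGP closed form into Theorem \ref{thm:main-euler}, with the hypothesis $N_{r,w}=0$ for $r\le g+1$ guaranteeing that only the range $r>g+1$ (where that closed form is valid) contributes, and the sign bookkeeping $-(-1)^{r+1}=(-1)^r$ is as you describe.
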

    
    Let $S(m, r)$ denote the number of $r$-partitions of $[m]$ for $m \ge 1$ and $r \ge 0$; these are called \textit{the Stirling numbers of the second kind}. Expanding the Bernoulli number $B_g$ (see  \cite{apostol1998introduction}) in terms of Stirling numbers
        \[
        B_g = \sum_{\ell = 0}^{g} (-1)^{\ell} \frac{\ell!}{\ell + 1} S(g, \ell),
        \] 
    we obtain the following closed form for the Euler characteristic of $\Delta_{g, w}$ for \textit{heavy/light} weights.  
    \begin{cor}
    Given a heavy/light weight vector $w = (1^{(n)}, \varepsilon^{(m)})$ where $n \ge g+1$, $m > 0$, and $0 < \varepsilon < 1/m$,
    \[
     \chi(\Delta_{g, w}) = 1 + \sum_{r = 1}^{m} \sum_{\ell = 0}^{g} (-1)^{n + r+\ell} \frac{(g+n + r-2)! \ell!}{g!(\ell + 1)} S(m, r) S(g, \ell).
    \]
    \end{cor}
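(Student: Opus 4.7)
The plan is to reduce to Corollary \ref{cor:euler-bernoulli} by computing $N_{r,w}$ explicitly for the heavy/light weight vector $w = (1^{(n)}, \varepsilon^{(m)})$, and then to expand the Bernoulli number via Stirling numbers of the second kind as indicated.

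First I would identify the $w$-admissible partitions of $[n+m]$. Since each heavy point carries weight $w_i = 1$, any part containing a heavy point together with any other marked point has total weight strictly greater than $1$, so every heavy point must occupy a singleton part. Since $m\varepsilon < 1$ by hypothesis, any nonempty subset of the $m$ light points forms an admissible part. Thus a $w$-admissible partition of $[n+m]$ into $r$ parts is the data of the $n$ forced singletons together with an arbitrary set partition of the $m$ light points into $r - n$ nonempty blocks. Consequently
\[
N_{r,w} = S(m,\, r - n), \qquad n+1 \le r \le n+m,
\]
and $N_{r,w} = 0$ otherwise. In particular $N_{r,w} = 0$ for $r \le n \le g+1$, so the hypothesis of Corollary \ref{cor:euler-bernoulli} is satisfied (with strict inequality $r \ge n+1 \ge g+2$).

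Next I would substitute into Corollary \ref{cor:euler-bernoulli}, reindex the sum via $r \mapsto r - n$, and apply the Stirling expansion $B_g = \sum_{\ell=0}^{g}(-1)^\ell \frac{\ell!}{\ell+1}S(g,\ell)$:
\[
\chi(\Delta_{g,w}) = 1 + \sum_{r=1}^{m} S(m,r)(-1)^{n+r} \frac{(g + n + r - 2)!}{g!}\,B_g,
\]
which, after inserting the expansion of $B_g$, collapses into the claimed double sum over $r$ and $\ell$.

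There is no real obstacle here: the only step requiring care is the combinatorial argument that forces each heavy marked point into its own block (which uses $w_i = 1$ in an essential way) and that allows arbitrary grouping of the light points (which uses $m\varepsilon < 1$). Everything else is bookkeeping of indices and signs, and a routine substitution into the two preceding formulas.
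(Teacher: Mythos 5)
Your proposal is correct and follows essentially the same route the paper intends: the identification $N_{r,w}=S(m,r-n)$ for heavy/light weights (which also appears in the paper's Corollary 4.3), substitution into Corollary \ref{cor:euler-bernoulli} after checking its hypothesis via $r\ge n+1\ge g+2$, and the Stirling expansion of $B_g$. The index bookkeeping and signs all check out against the stated formula.
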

  
Using this corollary above, we compute explicitly some of the Euler characteristics of $\Delta_{g, (1^{(n)}, \varepsilon^{(m)})}$ in Table \ref{table:heavy-light}. 
\begin{table}[h]
\begin{tabular}{ccccccccccc}
\cline{1-5} \cline{7-11}
\multicolumn{1}{|c|}{\begin{tabular}[c]{@{}c@{}}$g=0$\end{tabular}} & \multicolumn{1}{c|}{$m=1$}    & \multicolumn{1}{c|}{$m=2$}    & \multicolumn{1}{c|}{$m=3$}      & \multicolumn{1}{c|}{$m=4$}      & \multicolumn{1}{c|}{} & \multicolumn{1}{c|}{\begin{tabular}[c]{@{}c@{}}$g=1$\end{tabular}} & \multicolumn{1}{c|}{$m=1$}   & \multicolumn{1}{c|}{$m=2$}   & \multicolumn{1}{c|}{$m=3$}     & \multicolumn{1}{c|}{$m=4$}    \\ \cline{1-5} \cline{7-11} 
\multicolumn{1}{|c|}{$n=2$}                                                                                & \multicolumn{1}{c|}{-}    & \multicolumn{1}{c|}{$2$}    & \multicolumn{1}{c|}{$0$}      & \multicolumn{1}{c|}{$2$}      & \multicolumn{1}{l|}{} & \multicolumn{1}{c|}{$n=2$}                                                                                & \multicolumn{1}{c|}{$2$}   & \multicolumn{1}{c|}{$-1$}  & \multicolumn{1}{c|}{$5$}     & \multicolumn{1}{c|}{$-7$}   \\ \cline{1-5} \cline{7-11} 
\multicolumn{1}{|c|}{$n=3$}                                                                                & \multicolumn{1}{c|}{$3$}    & \multicolumn{1}{c|}{$-3$}   & \multicolumn{1}{c|}{$9$}      & \multicolumn{1}{c|}{$-15$}    & \multicolumn{1}{c|}{} & \multicolumn{1}{c|}{$n=3$}                                                                                & \multicolumn{1}{c|}{$-2$}  & \multicolumn{1}{c|}{$10$}  & \multicolumn{1}{c|}{$-26$}   & \multicolumn{1}{c|}{$82$}   \\ \cline{1-5} \cline{7-11} 
\multicolumn{1}{|c|}{$n=4$}                                                                                & \multicolumn{1}{c|}{$-5$}   & \multicolumn{1}{c|}{$19$}   & \multicolumn{1}{c|}{$-53$}    & \multicolumn{1}{c|}{$163$}    & \multicolumn{1}{c|}{} & \multicolumn{1}{c|}{$n=4$}                                                                                & \multicolumn{1}{c|}{$13$}  & \multicolumn{1}{c|}{$-47$} & \multicolumn{1}{c|}{$193$}   & \multicolumn{1}{c|}{$-767$} \\ \cline{1-5} \cline{7-11} 
\multicolumn{1}{|c|}{$n=5$}                                                                                & \multicolumn{1}{c|}{$25$}   & \multicolumn{1}{c|}{$-95$}  & \multicolumn{1}{c|}{$385$}    & \multicolumn{1}{c|}{$-1535$}  & \multicolumn{1}{c|}{} & \multicolumn{1}{c|}{$n=5$}                                                                                & \multicolumn{1}{c|}{$-59$} & \multicolumn{1}{c|}{$301$} & \multicolumn{1}{c|}{$-1499$} & \multicolumn{1}{c|}{$7501$} \\ \cline{1-5} \cline{7-11} 
                                                                                                         &                             &                             &                               &                               &                       &                                                                                                         &                            &                            &                              &                             \\ \cline{1-5} \cline{7-11} 
\multicolumn{1}{|c|}{\begin{tabular}[c]{@{}c@{}}$g=2$\end{tabular}} & \multicolumn{1}{c|}{$m=1$}    & \multicolumn{1}{c|}{$m=2$}    & \multicolumn{1}{c|}{$m=3$}      & \multicolumn{1}{c|}{$m=4$}      & \multicolumn{1}{c|}{} & \multicolumn{1}{c|}{\begin{tabular}[c]{@{}c@{}}$g=3$\end{tabular}} & \multicolumn{1}{c|}{$m=1$}   & \multicolumn{1}{c|}{$m=2$}   & \multicolumn{1}{c|}{$m=3$}     & \multicolumn{1}{c|}{$m=4$}    \\ \cline{1-5} \cline{7-11} 
\multicolumn{1}{|c|}{$n=3$}                                                                                & \multicolumn{1}{c|}{$3$}    & \multicolumn{1}{c|}{$-7$}   & \multicolumn{1}{c|}{$33$}     & \multicolumn{1}{c|}{$-127$}   & \multicolumn{1}{c|}{} & \multicolumn{1}{c|}{$n=4$}                                                                                & \multicolumn{1}{c|}{$1$}   & \multicolumn{1}{c|}{$1$}   & \multicolumn{1}{c|}{$1$}     & \multicolumn{1}{c|}{$1$}    \\ \cline{1-5} \cline{7-11} 
\multicolumn{1}{|c|}{$n=4$}                                                                                & \multicolumn{1}{c|}{$-9$}   & \multicolumn{1}{c|}{$51$}   & \multicolumn{1}{c|}{$-249$}   & \multicolumn{1}{c|}{$1251$}   & \multicolumn{1}{c|}{} & \multicolumn{1}{c|}{$n=5$}                                                                                & \multicolumn{1}{c|}{$1$}   & \multicolumn{1}{c|}{$1$}   & \multicolumn{1}{c|}{$1$}     & \multicolumn{1}{c|}{$1$}    \\ \cline{1-5} \cline{7-11} 
\multicolumn{1}{|c|}{$n=5$}                                                                                & \multicolumn{1}{c|}{$61$}   & \multicolumn{1}{c|}{$-359$} & \multicolumn{1}{c|}{$2161$}   & \multicolumn{1}{c|}{$-12959$} & \multicolumn{1}{c|}{} & \multicolumn{1}{c|}{$n=6$}                                                                                & \multicolumn{1}{c|}{$1$}   & \multicolumn{1}{c|}{$1$}   & \multicolumn{1}{c|}{$1$}     & \multicolumn{1}{c|}{$1$}    \\ \cline{1-5} \cline{7-11} 
\multicolumn{1}{|c|}{$n=6$}                                                                                & \multicolumn{1}{c|}{$-419$} & \multicolumn{1}{c|}{$2941$} & \multicolumn{1}{c|}{$-20579$} & \multicolumn{1}{c|}{$144061$} & \multicolumn{1}{c|}{} & \multicolumn{1}{c|}{$n=7$}                                                                                & \multicolumn{1}{c|}{$1$}   & \multicolumn{1}{c|}{$1$}   & \multicolumn{1}{c|}{$1$}     & \multicolumn{1}{c|}{$1$}    \\ \cline{1-5} \cline{7-11} 
\end{tabular}
\caption{Euler characteristics of $\Delta_{g,(1^{(n)},\varepsilon^{(m)})}$ for $g=0,1,2,3$ and some $(n,m)$ where $n\geq g+1$ and $m>0$. Note that when $g=0$, we start with $n=2$ since the space $\Delta_{0,(1,\varepsilon^{(m)})}$ is empty; when $g = 0, n=2,m=1$, $\Delta_{0, (1, 1, \varepsilon)}$ is also empty.}
\label{table:heavy-light}
\end{table}

\subsection{Motivation}\label{Motivation}
    Throughout the paper, we work over the complex numbers $\C$. The Deligne-Mumford-Knudsen compactification $\overline{\calM}_{g, n}$ is a toroidal compactification of the moduli stack $\calM_{g,n}$; the toroidal structure comes from the fact that the boundary divisor ${\overline{\calM}_{g, n} \smallsetminus \calM_{g, n}}$ has normal crossings. As a Deligne-Mumford stack, the rational cohomology of $\calM_{g, n}$ carries a mixed Hodge structure; see \cite{deligne1971theorie3}. That is, there is a weight filtration
    \[ W_1 \subset \cdots \subset W_{6g - g + 2n} = H^{*}(\mathcal{M}_{g, n}; \Q) \]
    such that, for each $j$, the quotient
    \[\gr^W_{i} H^j(\mathcal{M}_{g, n}, \Q)  = W_i\cap H^j(\mathcal{M}_{g, n}; \Q) /W_{i - 1} \cap H^j(\mathcal{M}_{g, n}; \Q)   \]
    carries a pure Hodge structure of weight $i$.
    The top graded piece of the weight filtration of $\calM_{g, n}$ can be identified with the reduced homology of the dual complex of the divisor $\overline{\calM}_{g, n} \smallsetminus \calM_{g, n}$, up to a degree shift. As discussed in ~\cite{CGP1, CGP2}, the dual complex of this divisor may be identified with the tropical moduli space $\Delta_{g, n}$, furnishing isomorphisms
    \[ \widetilde{H}_{j - 1}(\Delta_{g, n}; \Q) \cong \gr^W_{6g - 6 + 2n} H^{6g - 6 + 2n - j}(\mathcal{M}_{g, n}; \Q).  \]
    
    In \cite{HASSETT2003316}, Hassett introduced the moduli stack $\overline{\calM}_{g, w}$ as an alternative compactification of $\calM_{g, n}$: in $\overline{\calM}_{g, w}$, marked points are allowed to coincide if the sum of the corresponding entries of $w$ is no greater than $1$. Thus $\overline{\calM}_{g, w}$ contains an open substack $\calM_{g, w}$ parameterizing smooth, but not necessarily distinctly marked algebraic curves of genus $g$, and we have the containments ${\calM_{g, n} \subset \calM_{g, w} \subset \overline{\calM}_{g, w}}$. Although the embedding $\calM_{g, n} \subset \overline{\calM}_{g, w}$ is no longer toroidal, $\overline{\calM}_{g, w} \smallsetminus \calM_{g, w}$ is still a normal crossings divisor, and the dual complex of this divisor has a natural modular interpretation as the moduli space $\Delta_{g, w}$ of tropical $w$-stable curves of volume $1$ and genus $g$; see \cite{ulirsch14tropical, CHMR2014moduli}.
    Therefore, one has isomorphisms
    \[ \widetilde{H}_{j - 1}(\Delta_{g, w}; \Q) \cong \gr^W_{6g - 6 + 2n} H^{6g - 6 + 2n - j}(\mathcal{M}_{g, w}; \Q),  \]
    identifying the reduced rational homology of $\Delta_{g, w}$ with the top graded piece of the rational cohomology of $\mathcal{M}_{g, w}$.
    
\subsection{Previous work}
    This work benefits from and builds on previous work of many authors on the topology of tropical moduli spaces, which we summarize here.
    
    When $g = 0$, 
    the complex $\Delta_{0, w}$ may be identified with various objects whose homotopy types are known. 
    \begin{enumerate}
        \item When $w = (1^{(n)})$, Vogtmann showed that $\Delta_{0, n}$ is homotopic to a wedge of $(n - 2)!$ spheres of dimension $n-4$, by identifying it as the link of a vertex of a quotient simplicial complex by the outer automorphisms of a finitely generated free group; see \cite{Culler1986ModuliOG, vogtmann_1990}. In \cite{Robinson1996TheTR}, Robinson and Whitehouse gave a different proof of the same result, by contracting a large subcomplex $X_{0, n}$ of $\Delta_{0, n}$.
        
        \item When $w$ is \textit{heavy/light}, i.e. $w = (1^{(n)}, \varepsilon^{(m)})$ for $\varepsilon < 1/m$, Cavalieri, Hampe, Markwig, and Ranganathan in \cite{CHMR2014moduli} identified $\Delta_{0, w}$ with the link at the origin of the Bergman fan of a graphic matroid, thereby deriving that $\Delta_{0, w}$ is homotopic to a wedge of $(n-2)!(n-1)^{m}$ spheres of dimension $n + m - 4$, using \cite{ARDILA200638}. In \cite{CMPRS}, Cerbu, Marcus, Peilen, Ranganathan, and Salmon rederived this result using Vogtmann's result on $\Delta_{0, n}$ and contracting a large subcomplex. 
        
        \item When $w$ has at least two weight-$1$ entries, Cerbu et al. in \cite{CMPRS} showed that $\Delta_{0, w}$ is homotopic to a wedge of spheres of possibly varying dimensions, by identifying a large contractible subcomplex and using known results on homotopy types of subspace arrangements. The authors also provided infinite families of $w$ where $\Delta_{0, w}$ is disconnected, and examples where $\pi_{1}(\Delta_{0, w}) = \Z/2\Z$.  
        In the latter scenario, the authors proved that the universal cover has the homotopy type of a wedge of spheres. 
    \end{enumerate}
    
    For higher values of $g$, the following results are known. 
    \begin{enumerate}
        \item When $w = (1^{(n)})$, Chan, Galatius, and Payne showed in \cite{CGP2} that $\Delta_{1, n}$ is homotopic to $\frac{1}{2}(n-1)!$ spheres of dimension $n - 1$. In \cite{chan2015topology}, Chan independently showed that the reduced integral homology $\widetilde{H}_{\ast}(\Delta_{2, n}; \Z)$ is supported in the top two degrees and that a subcomplex of $\Delta_{2,n}$ has torsion in high degrees. Chan also computed the reduced rational homology $\widetilde{H}_{\ast}(\Delta_{2, n}; \Q)$ for $n \le 8$.
        For higher genera, Chan, Galatius, and Payne showed that $\Delta_{g, n}$ is at least $(n-3)$-connected \cite{CGP2}.
        \item When $w$ has at least two weight-$1$ entries, \cite{CMPRS} leveraged a relation between $\Delta_{0, w}$ and $\Delta_{1, w}$ to prove that $\Delta_{1, w}$ is homotopic to a wedge of spheres. 
        \item When $w = (1^{(n)}, \varepsilon^{(m)})$ is heavy/light, the same authors showed that $\Delta_{1, w}$ is homotopic to ${\frac{1}{2}(n-1)!n^m}$ spheres of dimension $n + m - 1$. 
    \end{enumerate}
    Most recently, in \cite{allcock}, Allcock, Corey, and Payne showed that $\Delta_{g}$ and $\Delta_{g, n}$ are simply connected for ${(g, n) \ne (0, 4), (0, 5)}$. They give two proofs of this result; one relies on a celluar approximation theorem in dimension 1 for symmetric CW-complexes, and the other, suggested by A. Putman, uses Harer's result ~\cite{Harer} that Harvey's complex of curves $\mathcal{C}_{g, n}$ is simply connected, together with the fact that $\Delta_{g, n}$ is homeomorphic to the quotient of the complex $\mathcal{C}_{g, n}$ by the action of the pure mapping class group. In this paper, we use a similar technique as in the first proof of \cite{allcock} and deduce simple connectedness for $\Delta_{g,w}$. %The complexity of $\Delta_{g, w}$ grows dramatically with the genus and the combinatorics of $w$. 
    The working framework in this paper is based on graph categories and symmetric $\Delta$-complexes, heavily used in \cite{CGP1, CGP2}. The main tools are the contractibility criterion developed in \cite{CGP2} and the Grothendieck group of varieties. 
\begin{rem}
    The tropical Hassett space $\Delta_{g,w}$ is related to several complexes studied in the context of geometric group theory, notably Harvey's complex of curves $\mathcal{C}_{g,n}$ ~\cite{harvey1981boundary} and Hatcher's complex of sphere systems $\mathcal{S}_{g,n}$ ~\cite{hatcher1995}. In either case, one can define a subcomplex parameterizing $w$-stable collections, i.e., those collections of curves, respectively $2$-spheres, whose dual graph is $w$-stable in the sense of (\ref{wStability}) below. Then $\Delta_{g,w}$ may be realized as the quotient of either subcomplex by a suitable group action. In the case of $\mathcal{C}_{g,n}$, the action is by the pure mapping class group, and in the case of $\mathcal{S}_{g,n}$, the action is by the quotient of the pure mapping class group by the normal subgroup generated by Dehn twists. A proof that either of these subcomplexes is simply connected would lead to an alternate proof of simply connectedness of $\Delta_{g,w}$, via the approach of Putman mentioned above. We also refer any interested reader to the first version of this preprint on arXiv, where we employ the Seifert-van Kampen theorem for CW complexes inductively to prove the simple connectivity of $\Delta_{g, w}$; see \cite{kannan2020topology}. 
\end{rem}    
    
    % We remark that Sam Payne pointed out a possible alternative proof to Theorem \ref{thm:sc} via geometric group theory to the authors after the manuscript's completion; see Remark \ref{rmk:alt}. 
\subsection*{Acknowledgements} We are grateful to Melody Chan for suggesting a motivic approach to the proof of Theorem \ref{thm:main-euler}, to Sam Payne for help with questions about mixed Hodge structures and connections to geometric group theory, and to Sam Freedman for many useful conversations related to this work. We also thank the organizers of the 2020 Summer Tropical Algebraic Geometry Online SeminAUR (STAGOSAUR)/Algebraic and Tropical Online Meetings (ATOM) for great learning opportunities and for bringing us together. SK was supported by an NSF Graduate Research Fellowship.
      
\section{Background}

\subsection{The graph categories $\Gamma_{g, w}$.}
\label{subsec:graph-cat}
Given integers $g \geq 0, n \geq 1$ and a vector $w \in (\Q \cap (0, 1])^n$ of rational numbers satisfying
\begin{equation}\label{gA}
    2g - 2 + \sum_{i = 1}^{n} w_i > 0,
\end{equation}
we define a graph category $\Gamma_{g, w}$. All graphs considered in this paper allow loops and parallel edges. First, a weighted $n$-marked graph $\G$ is a triple ${\G = (G, m, h)}$ consisting of a finite connected graph $G$ together with a marking function $m : [n] \to V(G)$ and a vertex weight function $h: V(G) \to \Z_{\geq 0}$. We say $\G$ is $w$-stable if it satisfies the stability condition 
\begin{equation}\label{wStability}
2 h(v) - 2 + \val(v) + \sum_{i \in m^{-1}(v)} w_i > 0,
\end{equation}
for all $v \in V(G)$, where $\val(v)$ denotes the valence of $v$ in $G$, i.e., the number of half edges incident to $v$; thus a loop contributes twice to the valence of its vertex. The genus of $\G$ is defined as \[\mathbf{g}(\G) :=b^1(G) + \sum_{v \in V(G)} h(v), \] where $b^1(G) = |E(G)| - |V(G)| + 1$ is the first Betti number of $G$. The objects in $\Gamma_{g,w}$ are $w$-stable weighted $n$-marked graphs of genus $g$.

The morphisms in $\Gamma_{g, w}$ are maps that factor as compositions of isomorphisms and edge contractions. To be precise, an \textit{isomorphism} $\varphi: \G \to \G'$ where $\G = (G, m, h)$ and ${\G' = (G', m', h')}$ is an isomorphism $\varphi: G \to G'$ such that $m' = \varphi \circ m$ and $h' \circ \varphi = h$. An \textit{edge contraction} $c: \G \to \G/e$ of an edge $e$ in $G$ is given by removing $e$ and identifying its two endpoints if $e$ is not a loop, and by removing $e$ and increasing the weight of its base vertex by one if $e$ is a loop. We say two graphs have the same \textit{combinatorial type} if they are isomorphic in $\Gamma_{g, w}$.

We will say that $\G'$ is an \textit{uncontraction} of a graph $\G$ if $\G'$ contracts to $\G$ after a series of edge contractions. One can alternatively describe $\Gamma_{g, w}$ as the full subcategory of the category $\Gamma_{g, n}$ defined in ~\cite{CGP2} whose objects are graphs in $\Gamma_{g, n}$ that satisfy (\ref{wStability}); when $w = (1^{(n)})$, $\Gamma_{g, n} = \Gamma_{g, w}$. To work with a small category, we hereafter tacitly replace $\Gamma_{g, n}$ with a choice of skeleton thereof. This also induces a choice of skeleton of $\Gamma_{g, w}$ for all weight vectors $w$. Abusing notation, we will write $\G \in \Gamma_{g, w}$ for $\G \in \mathrm{Ob}(\Gamma_{g, w})$. We now give the definition of the tropical moduli space $\Delta_{g, w}$ using $\Gamma_{g, w}$.

\subsection{Description of $\Delta_{g, w}$ as a symmetric $\Delta$-complex}
\label{sec:sym} The space $\Delta_{g, w}$ is the geometric realization of a symmetric $\Delta$-complex, in the sense of ~\cite{CGP1}. Let $I$ be the category having one object for each finite set
\[[p] := \begin{cases}\{0, \ldots, p\} & \text{for }p \ge 0, \\
\varnothing & p = -1.
\end{cases}
\]
and morphisms consisting of all injections.
A symmetric $\Delta$-complex $X$ is a functor ${X: I^\mathrm{op} \to \mathsf{Sets}}$, and a morphism of symmetric $\Delta$-complexes is a natural transformation of functors. 
For simplicity, we write $X_p$ for $X([p])$. There is a geometric realization functor associating a topological space to each symmetric $\Delta$-complex $X$, which we describe below. Each injection $\iota:[p] \to [q]$ induces a map on standard simplices $\iota_*: \sigma^p \to \sigma^q$, defined by \[\iota_*\left(\sum_{i=0}^p t_ie_i\right) = \sum_{i=0}^q\left(\sum_{j\in\iota^{-1}(i)}t_j\right)e_i.\] The geometric realization of $X$ is then defined as
\[|X| := \left(\coprod_{p \geq 0} X_p \times \sigma^p\right) \bigg/ \sim \]
where the equivalence relation $\sim$ is generated by relations of the form
$(X(\iota)(x), a) \sim (x, \iota_*(a)),$
whenever $\iota \in \Hom_I\left([p], [q]\right)$, $x \in X_q$, and $a \in \sigma^p$. The $k$-skeleton of $|X|$ is \[|X|^{(k)} = \left(\coprod_{p = 0}^k X_p \times \sigma^p\right) \bigg/ \sim\] with the same equivalence relation.

We will use $\Delta_{g, w}$ both for the functor and its geometric realization when there is no confusion. We set
\[\Delta_{g, w}([p]) = \{(\G, \tau) \mid \G \in \Gamma_{g, w},|E(\G)| = p + 1,\, \tau: E(\G) \to [p] \text{ a bijection}  \}/\sim,   \]
where $(\G, \tau) \sim (\G', \tau')$ if and only if there exists a $\Gamma_{g, w}$-isomorphism $\varphi: \G \to \G'$ such that the diagram
\[ \begin{tikzcd}
&E(\G) \arrow[rr, "\varphi"] \arrow[dr, "\tau"] & &E(\G') \arrow[dl, "\tau'"']\\
& &\lbrack p \rbrack  &
\end{tikzcd}\]
commutes. We put $[\G, \tau]$ for the equivalence class of $(\G, \tau)$. On morphisms, we define $\Delta_{g, w}$ as follows: given an injection $\iota: [p] \to [q]$ and $[\G, \tau] \in \Delta_{g, w}([q])$, we set $\bfH$ to be the graph obtained from $\G$ by contracting all edges which are not labelled by $\tau^{-1}(\iota([p]))$, and $\pi: E(\bfH) \to [p]$ to be the unique edge-labelling of $\bfH$ which preserves the order of the remaining edges. It is known that the order of contraction of edges does not affect the final result. Then $\Delta_{g, w}(\iota)([\G, \tau]) = [\bfH, \pi]$.
\begin{rem}
    There is an alternate definition of $\Delta_{g,w}$ as a colimit of maps of simplices; see \cite[Section 1]{CMPRS}. We briefly summarize the intuition here.
    
    As defined in the introduction, a $w$-stable tropical curve of volume 1 is a pair $(\G, \ell)$ where $\G \in \Gamma_{g,w}$ and $\ell: E(\G) \to \R_{\geq 0}$ is a length function satisfying $\sum_{e \in E(\G)} \ell(e) = 1$. If $\ell(e) = 0$ for some $e$, then $(\G, \ell)$ is identified with $(\G/e, \ell|_{\G/e})$, where $\ell|_{\G/e}$ is the restriction of $\ell$ to $E(\G/e) = E(\G) \smallsetminus\{e\}$. A point $x \in \Delta_{g,w}$ can be identified with a $w$-stable tropical curve of volume $1$. Suppose $x$ is contained in a $p$-simplex corresponding to an edge-labelled graph $[\G, \tau]$, with coordinates $(a_0, \ldots, a_p)$. Then $x$ corresponds to the tropical curve $(\G, \ell)$ where $\ell(\tau^{-1}(i)) = a_i$ for each $i \in [p]$. In general, the point $x$ may be contained in more than one simplex, but there will be a unique simplex which contains $x$ in its interior. This corresponds to the unique representative of the resulting tropical curve which has a strictly positive length function.
\end{rem}

\begin{exmp} Let $g=1$ and $w=(\varepsilon^{(3)})$ for $0 < \varepsilon < 1/3$. The combinatorial types in $\Gamma_{1,w}$ are edge contractions of the type shown in the interior of the left triangle in Figure \ref{fig:delta2-1eee}. The 0-skeleton of $\Delta_{1, w}$ is a single point; the 1-skeleton is three half-edges glued at the point. The whole space is a hollow tetrahedron, which is homeomorphic to $S^2$. 

\begin{figure}[h]
    \includegraphics[scale=0.6]{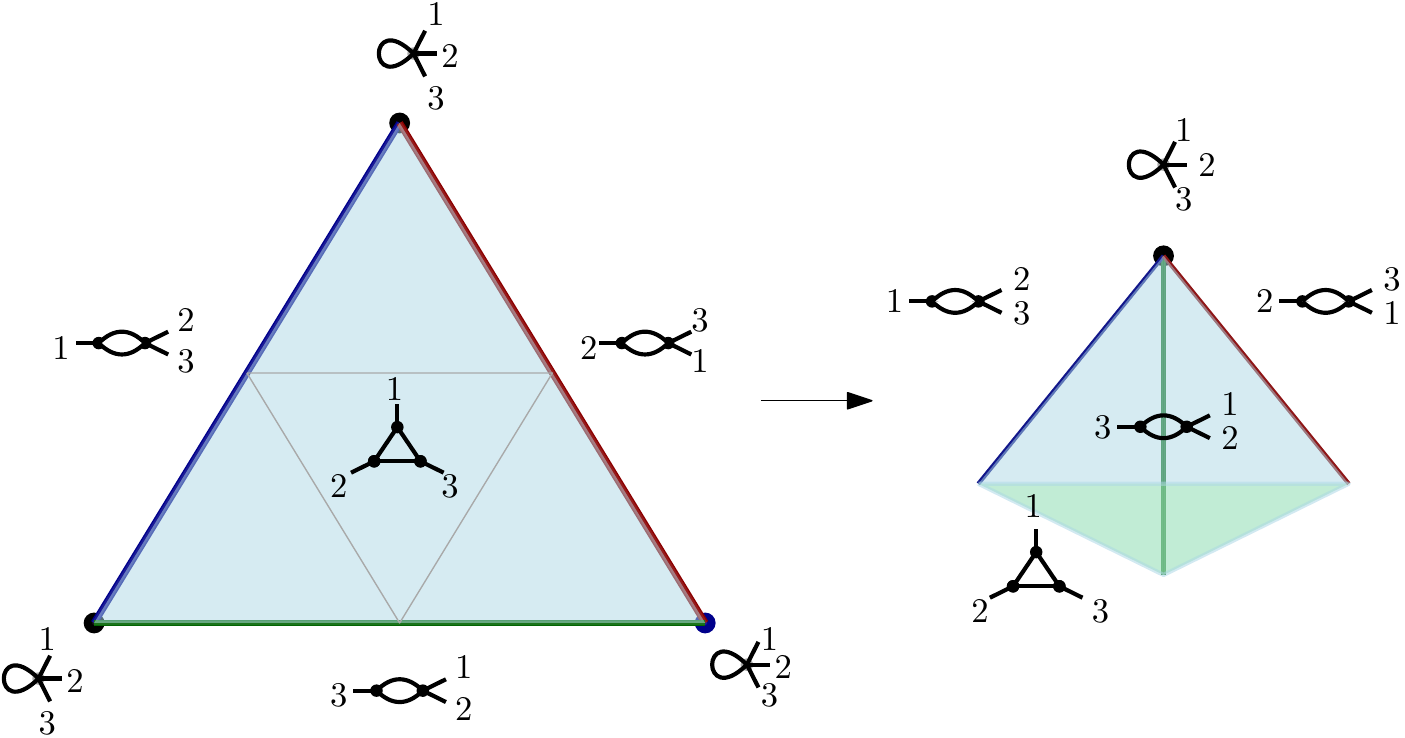}
    \caption{
    The geometric realization of $\Delta_{1, w}$ is the (hollow) tetrahedron on the right, obtained by folding the $2$-simplex on the left along the interior grey lines.
    }
    \label{fig:delta2-1eee}
\end{figure}
\end{exmp}

\section{Proof that $\Delta_{g,w}$ is simply connected}
\label{sec: delta_gw is simply connected}
Let $g \geq 1$, $n > 0$, and fix a weight vector $w \in (\Q \cap (0, 1])^n$. We will prove that $\pi_1(\Delta_{g, w})$ is trivial. Our strategy is to show that the 1-skeleton of $\Delta_{g,w}$ is contained in a contractible subcomplex. We first recall briefly the contractibility criterion developed in Section 4 of \cite{CGP2}.

Let $X$ be a symmetric $\Delta$-complex (representing both the functor and the geometric realization thereof when no confusion arises), and continue to denote by $X_i$ the set $X([i])$.
\begin{defn}
A property on $X$ is a subset of the vertices $P \subseteq X_0$.
\end{defn}

Writing $\simp(X) = \coprod_{p \geq 0}X_p$ for the set of all simplices of $X$, and given a subset of vertices $P \subseteq X_0$, we define 
\[P(X) := \{\sigma \in \simp(X): \sigma \text{ has a vertex in }P\}.\] Elements of $P(X)$ are called $P$-simplices. Given an integer $i\geq 0$, let $X_{P,i}$ denote the subcomplex of $X$ generated by the set of $P$-simplices with at most $i$ non-$P$ vertices. Write $X_P = X_{P,\infty}$ to be the subcomplex generated by all $P$-simplices. 

Let $\B \in \Gamma_{g,w}$ be the graph consisting of one edge connecting two vertices $v_1$ and $v_2$, such that $h(v_1) = 1$ and $m^{-1}(v_1) = \emptyset$. Note that when $g=1$ with $\sum_{i=1}^n w_i > 1$, or when $g \ge 2$, $\B$ is stable. Given a graph $\G \in \Gamma_{g,w}$, we call an edge $e\in E(\G)$ a \textit{1-end} if contracting all edges of $\G$ except for $e$ yields $\B$.

\begin{lem}
Let $g=1$ with $\sum_{i=1}^n w_i > 1$, or $g \geq 2$ with arbitrary $w$. Let $X = \Delta_{g,w}$ and $P,Q \subset X_0$ be properties with $P = \emptyset$ and $Q = \{\B\}$. Then there is a deformation retract from $(\Delta_{g,w})_Q \searrow (\Delta_{g,w})_{Q,0}$.
\label{lemma: prop 4.11}
\end{lem}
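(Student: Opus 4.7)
The plan is to follow the template of \cite[Proposition 4.11]{CGP2}, constructing an explicit strong deformation retraction from $(\Delta_{g,w})_Q$ onto $(\Delta_{g,w})_{Q,0}$ via a straight-line homotopy. The idea is to shrink the lengths of all non-$1$-end edges to zero while proportionally scaling up the $1$-end edges to maintain total volume $1$; at time $t = 1$ we land on a tropical curve whose underlying graph has every edge a $1$-end, so every vertex of the resulting simplex is $\B$.

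Given a $Q$-simplex $[\G,\tau]$, I would partition $E(\G) = E_1(\G) \sqcup E_0(\G)$, where $E_1(\G)$ is the (nonempty) set of $1$-end edges. For a point $(\G, \ell)$ in the interior of this simplex, let $s_1(\ell) = \sum_{e \in E_1(\G)} \ell(e) > 0$ and define
\[
H_t(\G, \ell)(e) = \begin{cases} \ell(e)\bigl((1-t) + t/s_1(\ell)\bigr), & e \in E_1(\G), \\ (1-t)\,\ell(e), & e \in E_0(\G). \end{cases}
\]
A direct calculation confirms that $\sum_e H_t(\G,\ell)(e) = 1$ for all $t \in [0, 1]$, and that $H_0$ is the identity while the image of $H_1$ on a simplex $[\G, \tau]$ is the face $(\G/E_0(\G), \ell/s_1(\ell))$.

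To verify that $H_t$ descends to a well-defined continuous self-map on the symmetric $\Delta$-complex $(\Delta_{g,w})_Q$, I would check two compatibilities: (i) the partition $E(\G) = E_1(\G) \sqcup E_0(\G)$ is invariant under automorphisms of $\G$, which is immediate from the intrinsic definition of a $1$-end; and (ii) the partition is compatible with contraction of non-$1$-end edges, i.e., if $f \in E_0(\G)$ then $E_1(\G/f) = E_1(\G)$, which follows because $e$ is a $1$-end of $\G/f$ iff contracting $E(\G/f) \smallsetminus \{e\}$ in $\G/f$ yields $\B$, and edge contractions commute. The same commutativity argument shows that every edge of $\G/E_0(\G)$ is itself a $1$-end of $\G/E_0(\G)$, placing the image of $H_1$ inside $(\Delta_{g,w})_{Q,0}$.

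The main obstacle I expect is verifying that the contracted graph $\G/E_0(\G)$ is genuinely $w$-stable, so that the image of $H_1$ remains in $\Delta_{g,w}$. The hypothesis $g \geq 2$, or $g = 1$ with $\sum w_i > 1$, guarantees that $\B$ itself is $w$-stable. Combining this with a combinatorial analysis of the bridge structure of $1$-end edges---each $1$-end separates $\G$ into a genus-bearing unmarked subgraph and a marked subgraph, with the unmarked side contributing total weight $1$ to $\B$---allows one to show that contracting all non-$1$-end edges absorbs the removed genera and markings into vertices whose stability inequalities remain strict.
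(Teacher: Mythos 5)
Your straight-line homotopy is not well defined on all of $(\Delta_{g,w})_Q$, and this is a genuine gap rather than a technicality. The subcomplex $(\Delta_{g,w})_Q$ is \emph{generated} by the $Q$-simplices, so it contains every face of a $Q$-simplex, including the faces obtained by contracting all of the $1$-ends; as the paper itself notes, $(\Delta_{g,w})_Q$ is the entire loop-weight locus, which contains many graphs with no $1$-ends at all (e.g.\ a loop based at a weight-zero marked vertex). On such faces $E_1 = \emptyset$, so $s_1(\ell) = 0$ and your formula divides by zero. Worse, the map does not extend continuously: take $g=2$ and $\G$ with a central marked vertex $v$, two bridges $e_1, e_2$ to leaves $u_1$ (weight $1$) and $u_2$ (weight $0$ with a loop $f$). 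Then $E_1(\G) = \{e_1, e_2\}$ and $E_0(\G) = \{f\}$, and as $\ell(e_1), \ell(e_2) \to 0$ the value of $H_1$ depends on the ratio $\ell(e_1):\ell(e_2)$, so $H_t$ has no continuous extension to the $f$-vertex of this simplex, which lies in $(\Delta_{g,w})_Q$. This degenerate locus is exactly what the paper's one-line proof is pointing at when it invokes the \emph{canonical maximal uncontraction by $1$-ends}: a simplex of $(\Delta_{g,w})_Q$ with few or no $1$-ends must first be canonically realized as a face of a larger $Q$-simplex obtained by uncontracting $1$-ends, and the retraction is built (as in \cite[Section 4]{CGP2}) using that canonical structure rather than by rescaling the coordinates already present. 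Your proposal never uses uncontractions in the construction of the homotopy, so it cannot handle these points.

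Separately, the obstacle you flag as the main one --- $w$-stability of $\G/E_0(\G)$ --- is not an issue at all: the stability inequality (\ref{wStability}) is superadditive under vertex merging and is preserved by loop contraction, so $\Gamma_{g,w}$ is closed under arbitrary edge contractions; this is already built into the definition of $\Delta_{g,w}$ as a symmetric $\Delta$-complex. The compatibility checks (i) and (ii) that you do carry out are correct, and your identification of the image of $H_1$ with a simplex all of whose vertices are $\B$ is also correct on the locus where $s_1 > 0$; what is missing is the treatment of the locus $s_1 = 0$, which is where the real content of the lemma lies.
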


\begin{proof}
As is discussed in the proof of Theorem 1.1(2) in \cite{CGP2}, this lemma reduces to the fact that every graph has a canonical maximal uncontraction by $1$-ends. Indeed, the argument given in \cite{CGP2} applies in this case as well.
\end{proof}

We consider the loop-weight locus of $\Delta_{g, w}$ parametrizing tropical curves with loops or vertices of positive weight, denoted by $\Delta_{g, w}^{\mathrm{lw}}$, and show that it is contractible.
\begin{thm}
The subcomplex $\Delta_{g,w}^\mathrm{lw}$ of $\Delta_{g,w}$ is contractible.
\end{thm}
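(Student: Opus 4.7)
The strategy is to reduce contractibility of $\Delta_{g,w}^{\mathrm{lw}}$ to the deformation retract of Lemma \ref{lemma: prop 4.11}, by first showing that $\Delta_{g,w}^{\mathrm{lw}}$ deformation retracts onto $(\Delta_{g,w})_Q$ (where $Q = \{\B\}$), and then noting that the resulting target is a cone with apex $\B$. The plan follows the template of \cite{CGP2} for the unweighted case, adapted to account for the $w$-stability constraints.

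In the regime where $\B$ is $w$-stable (namely $g \geq 2$, or $g = 1$ with $\sum_i w_i > 1$), I would proceed as follows. Any $w$-stable graph $\G$ in the loop-weight locus admits a canonical uncontraction producing a $1$-end: if a vertex $v$ of $\G$ has $h(v) \geq 1$, one splits off a new weight-$1$ vertex from $v$ along a fresh edge, decreasing $h(v)$ by one; alternatively, a loop at a vertex $v$ of weight zero may be detached into a weight-$1$ vertex joined to $v$ by a new edge, preserving the first Betti number. Iterating to a canonical maximal uncontraction and invoking the contractibility criterion from \cite[\S 4]{CGP2} yields a deformation retract of $\Delta_{g,w}^{\mathrm{lw}}$ onto $(\Delta_{g,w})_Q$. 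Combined with Lemma \ref{lemma: prop 4.11}, this further collapses $\Delta_{g,w}^{\mathrm{lw}}$ onto $(\Delta_{g,w})_{Q,0}$, which is the closed star of the vertex $\B$, hence a cone and therefore contractible.

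For the remaining case $g = 1$ with $\sum_i w_i \leq 1$, a direct stability check handles things: any genus-$1$ graph in the loop-weight locus is a lollipop (one loop attached to a tree) with $b^1 \le 1$ and $\sum_v h(v) \le 1$, and any non-loop-vertex leaf would require marks of total weight exceeding $1$, which is impossible in this regime. It follows that $\Delta_{1,w}^{\mathrm{lw}}$ collapses to a single $0$-simplex, namely the one-vertex one-loop graph carrying all $n$ marks, and contractibility is immediate.

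The main obstacle will be showing the proposed uncontractions in the general case are canonical, $w$-stable, and functorial with respect to $\Gamma_{g,w}$-isomorphisms, so that the resulting homotopy descends to the symmetric $\Delta$-complex. The bookkeeping from \cite{CGP2} should adapt, but one must verify that the receiving vertex for the split-off $1$-end is uniquely determined by the combinatorial type of $\G$ and invariant under the symmetry action, and that the split-off operation never destabilizes the residual vertex except precisely when $\G$ is already a uncontraction of $\B$ along that edge.
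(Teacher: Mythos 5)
Your overall route is the same as the paper's: identify the loop--weight locus with the subcomplex $(\Delta_{g,w})_Q$ of graphs admitting uncontractions with a $1$-end, invoke Lemma \ref{lemma: prop 4.11} to retract onto $(\Delta_{g,w})_{Q,0}$, and treat the degenerate case $g=1$, $\sum w_i \le 1$ by hand (where, as you observe, stability forces the locus to be the single $0$-simplex given by one loop carrying all markings). One simplification: you do not need a separate deformation retract from $\Delta_{g,w}^{\mathrm{lw}}$ onto $(\Delta_{g,w})_Q$ --- the two subcomplexes are \emph{equal}. Your uncontraction procedure (splitting a weight-$1$ vertex off a positively weighted vertex, or detaching a loop into a bridge to a weight-$1$ vertex) is exactly the argument for the inclusion $\Delta_{g,w}^{\mathrm{lw}} \subseteq (\Delta_{g,w})_Q$, and the reverse inclusion holds because any contraction of a graph with a $1$-end has a loop or a vertex of positive weight. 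So the only homotopy input needed is Lemma \ref{lemma: prop 4.11} itself.

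The genuine gap is in your last step. The subcomplex $(\Delta_{g,w})_{Q,0}$ is \emph{not} the closed star of the vertex $\B$; by definition it is generated by the simplices \emph{all} of whose vertices lie in $Q=\{\B\}$, i.e.\ by graphs every edge of which is a $1$-end. (The closed star of $\B$ is the larger complex $(\Delta_{g,w})_Q$.) Concretely, $(\Delta_{g,w})_{Q,0}$ is the closure of the single combinatorial type consisting of a central vertex carrying all markings with $g$ bridges to unmarked weight-$1$ vertices, so its realization is $\sigma^{g-1}/\Aut(\G) = \sigma^{g-1}/S_g$. Moreover, the inference ``closed star of a vertex, hence a cone, hence contractible'' fails for symmetric $\Delta$-complexes: the paper's own example $\Delta_{1,(\varepsilon^{(3)})}\cong S^2$ is the closed star of its unique vertex and is not contractible. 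The correct justification, which is what the paper uses, is that the quotient of a simplex by a subgroup of its symmetry group is contractible. With that substitution your argument closes up and agrees with the paper's proof.
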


\begin{proof}
We first note that $\Delta_{g,w}^\mathrm{lw}$ is indeed a subcomplex of $\Delta_{g,w}$, as the property of having a loop or vertex of positive weight is closed under edge contraction. When $g=1$ and $\sum_{i=1}^n w_i \leq 1$, the subcomplex $\Delta_{g,w}^\mathrm{lw}$ contains a single point, so the statement is trivially true. Let us assume for the rest of the proof that either $g \geq 2$ or $\sum_{i=1}^n w_i > 1$ when $g=1$.

By Lemma \ref{lemma: prop 4.11}, there is a deformation retract from $(\Delta_{g,w})_Q \searrow (\Delta_{g,w})_{Q,0}$. Observe that $(\Delta_{g,w})_Q$ is exactly the loop-weight locus $\Delta_{g,w}^\mathrm{lw}$. Indeed, if $\G \in (\Delta_{g,w})_Q$, then $G$ is a contraction of a tropical curve with an 1-end, so $\G$ must have a loop or vertex with positive weight; if $\G \in \Delta_{g,w}^\mathrm{lw}$, then $\G$ has a nontrivial uncontraction by 1-ends, so $\G$ is in the closure of a $Q$-simplex. On the other hand, the subcomplex $(\Delta_{g,w})_{Q,0}$ is the closure of the locus of tropical curves that have the following combinatorial type: the graph with a central vertex $v$ and $g$ bridges to vertices of weight 1, with all markings concentrated on $v$. So the geometric realization of $(\Delta_{g,w})_{Q,0}$ is the quotient of a $(g-1)$-simplex by the action of its automorphism group, which is contractible. Therefore, the locus $\Delta_{g,w}^\mathrm{lw}$ is contractible.
\end{proof}

\noindent Next, we show the slightly larger subcomplex $\Delta_{g,w}^\mathrm{mlw}$ that contains $\Delta_{g,w}^\mathrm{lw}$ is again contractible.

\begin{thm}
The subcomplex $\Delta_{g,w}^\mathrm{mlw}$ of $\Delta_{g,w}$ parametrizing tropical curves with loops, vertices of positive weight, or multiple edges is contractible.
\end{thm}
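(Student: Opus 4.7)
The plan is to deduce the result from the previous theorem by showing that $\Delta_{g,w}^{\mathrm{mlw}}$ deformation retracts onto $\Delta_{g,w}^{\mathrm{lw}}$. The key geometric observation is that whenever $\G \in \Gamma_{g,w}$ contains a pair of parallel edges $e_1, e_2$ between vertices $u, v$, collapsing $e_1$ identifies $u$ with $v$ and turns $e_2$ into a loop, so the corresponding face of the simplex of $\G$ in $\Delta_{g,w}$ already lies in the loop-weight locus. Thus the multi-edge strata of $\Delta_{g,w}^{\mathrm{mlw}}$ are attached to $\Delta_{g,w}^{\mathrm{lw}}$ along canonical boundary faces, and the deformation retract should be implemented by uniformly shrinking the lengths of multi-edge edges.

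Concretely, for each $\G$ let $E_m(\G) \subseteq E(\G)$ be the set of edges sharing both endpoints with some other edge of $\G$; this subset is $\Aut(\G)$-invariant, so any rule phrased in its terms descends to the symmetric $\Delta$-complex quotient. I would define a homotopy $H \colon \Delta_{g,w}^{\mathrm{mlw}} \times [0,1] \to \Delta_{g,w}^{\mathrm{mlw}}$ that, on the open simplex of $\G$ with coordinates $(a_e)_{e \in E(\G)}$, linearly scales the coordinates indexed by $E_m(\G)$ down toward zero while uniformly rescaling the remaining coordinates to preserve $\sum a_e = 1$. At time $t = 1$, every multi-edge has length zero, so the resulting tropical curve is obtained from $\G$ by contracting $E_m(\G)$: for each parallel family of size $k$, the contraction merges two vertices and then absorbs $k-1$ resulting loops, each of which increments the vertex weight. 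The image therefore has a positive-weight vertex and lies in $\Delta_{g,w}^{\mathrm{lw}}$, and contractibility of $\Delta_{g,w}^{\mathrm{mlw}}$ follows from that of $\Delta_{g,w}^{\mathrm{lw}}$.

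The main obstacle is the degenerate stratum where every edge of $\G$ belongs to $E_m(\G)$: there are no non-multi edges to absorb the rescaling and the naive homotopy is forced to be the identity. On this stratum I would use a secondary canonical move, either by restricting the rescaling to all-but-one edge of each parallel family, or more uniformly by reformulating the argument in the framework of Lemma~\ref{lemma: prop 4.11} with the property $Q$ enlarged beyond $\{\B\}$ to include suitable $1$-edge witnesses for the multi-edge structure. Continuity of $H$ across simplex faces must also be verified, because contracting a non-multi edge can enlarge $E_m$, but the induced homotopies match in the limit by volume preservation. The degenerate cases $g = 1$ with $\sum_i w_i \leq 1$ can be dispatched directly, as in the opening of the previous proof.
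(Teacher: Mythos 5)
Your strategy (push the multiple-edge strata onto the loop-weight locus and invoke the previous theorem) is reasonable in outline, but the explicit homotopy you propose has two genuine gaps, and the two ``obstacles'' you flag at the end are in fact fatal to this implementation rather than technicalities. First, the homotopy is not well defined on the geometric realization. On the simplex of $\G$ you shrink the coordinates indexed by $E_m(\G)$, but on a face obtained by contracting a non-multi edge $e$ the rule must agree with the one for $\G/e$, which shrinks $E_m(\G/e)$. Contracting $e$ can create new parallel pairs: if $e_1$ joins $u$ to $v$, $e_2$ joins $u$ to $x$, and $e_3$ joins $x$ to $v$, then $e_1,e_2\notin E_m(\G)$ but $e_1,e_2\in E_m(\G/e_3)$. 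Taking such a configuration inside a graph that already has a parallel pair elsewhere (so that its simplex lies in $\Delta_{g,w}^{\mathrm{mlw}}$), the rule on the big simplex leaves $a_{e_1},a_{e_2}$ essentially unchanged as $a_{e_3}\to 0$, while the rule on the face sends them to zero. ``Volume preservation'' cannot repair this: the discrepancy is in \emph{which} coordinates tend to zero, not in their total. Second, the degenerate stratum where $E_m(\G)=E(\G)$ (e.g.\ two vertices joined by $p+1$ parallel edges) is not a corner case but the crux: any rule keeping ``all but one edge of each parallel family'' requires a choice of survivor, and no such choice is $\Aut(\G)$-invariant because $\Aut(\G)$ permutes the parallel edges transitively; breaking ties by length fails exactly at points fixed by an edge swap. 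Enlarging $Q$ in Lemma~\ref{lemma: prop 4.11} does not obviously help either, since that lemma's proof rests on the existence of a \emph{canonical} maximal uncontraction by $1$-ends, and there is no analogous canonical witness for parallel edges.

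The paper avoids both problems by not constructing a retraction at all. It observes that every proper contraction of a graph with multiple edges has a loop, positive weight, or multiple edges, so $\Delta_{g,w}^{\mathrm{mlw}}$ is obtained from $\Delta_{g,w}^{\mathrm{lw}}$ by iteratively attaching, for each multiple-edge graph $\G$ with $p+1$ edges, a cone on $\partial\sigma^{p}/\Aut(\G)=S^{p-1}/\Aut(\G)$. Because $\Aut(\G)$ contains the transposition swapping two parallel edges, these quotients are contractible by Proposition 5.1 of Allcock--Corey--Payne, and the mapping cone of a map out of a contractible space deformation retracts onto its target. This gives the homotopy equivalence $\Delta_{g,w}^{\mathrm{lw}}\hookrightarrow\Delta_{g,w}^{\mathrm{mlw}}$ with no equivariant choices and no face-matching conditions to check. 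To salvage your approach you would need to solve the equivariance and face-compatibility problems simultaneously, which is essentially as hard as the theorem itself; I recommend switching to the mapping-cone argument.
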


\begin{proof}
Note that contracting an edge in a graph with multiple edges results in a graph with loops or multiple edges, so $\Delta_{g,w}^\mathrm{mlw}$ is a subcomplex. Our proof is parallel to the proof of Theorem 6.1 of \cite{allcock}, which relies on the fact that if $Y$ is contractible and $f: Y \to Z$ is continuous, then the inclusion of $Z$ into the mapping cone of $f$ is a homotopy equivalence. The subcomplex $\Delta_{g,w}^\mathrm{mlw}$ is obtained from the subcomplex $\Delta_{g,w}^\mathrm{lw}$ as an iterated mapping cone through a series of maps from quotients of spheres $S^{p-1}/\Aut(\mathbf{G})$, where $\mathbf{G}$ ranges over all graphs with multiple edges. Such quotients are contractible by \cite[Proposition 5.1]{allcock}, so the inclusion of $\Delta_{g,w}^\mathrm{lw}$ into $\Delta_{g,w}^\mathrm{mlw}$ is a homotopy equivalence.
\end{proof}

\noindent Finally, we prove the main theorem of the section.
\begin{thm}
Let $g, n \geq 1$ and $w \in (\Q \cap (0, 1])^n$. Then $\Delta_{g, w}$ is simply connected.
\end{thm}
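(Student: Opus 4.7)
The plan is to show that the 1-skeleton of $\Delta_{g,w}$ is contained in the contractible subcomplex $\Delta_{g,w}^{\mathrm{mlw}}$, and then apply a cellular approximation argument (for symmetric $\Delta$-complexes, as used in \cite{allcock}) to conclude that every loop in $\Delta_{g,w}$ is null-homotopic. Concretely, any based loop $\gamma \colon S^1 \to \Delta_{g,w}$ is homotopic, through symmetric-$\Delta$-cellular approximation, to a loop supported on the 1-skeleton; once that loop lies inside a contractible subcomplex, it contracts there.

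The combinatorial heart of the proof is the verification that every vertex and every edge of $\Delta_{g,w}$ parametrizes a graph with a loop, a weighted vertex, or a multi-edge. A vertex of $\Delta_{g,w}$ corresponds to a one-edge $w$-stable graph $\G$ of genus $g \geq 1$; the single edge is either a loop (placing $\G$ in $\Delta_{g,w}^{\mathrm{lw}}$) or a non-loop edge joining two vertices, in which case $b^1(\G)=0$ forces $\sum_v h(v) = g \geq 1$, so some vertex has positive weight and $\G \in \Delta_{g,w}^{\mathrm{lw}}$. An edge of $\Delta_{g,w}$ corresponds to a two-edge $w$-stable graph $\G$, and a short case analysis on $|V(G)| \in \{1,2,3\}$ shows that if $\G$ has no loops, no multi-edges, and no weighted vertices, then $\G$ is a tree on three vertices with $b^1(G)=0$ and $\mathbf{g}(\G)=0$, contradicting $g \geq 1$. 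Hence every such $\G$ lies in $\Delta_{g,w}^{\mathrm{mlw}}$.

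The remaining issue is to justify the cellular approximation: for a symmetric $\Delta$-complex $X$, any continuous map $S^1 \to |X|$ is homotopic to a map into $|X|^{(1)}$. This is the dimension-$1$ analogue of the standard cellular approximation theorem, and it holds in the symmetric $\Delta$-complex setting by the argument spelled out in \cite{allcock}. Combined with the inclusions
\[|\Delta_{g,w}|^{(1)} \subseteq \Delta_{g,w}^{\mathrm{mlw}} \subseteq \Delta_{g,w},\]
and the contractibility of $\Delta_{g,w}^{\mathrm{mlw}}$ established above, this yields $\pi_1(\Delta_{g,w}) = 1$.

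The main obstacle I anticipate is purely bookkeeping: one must handle the low-genus edge cases ($g = 1$ with $\sum w_i \leq 1$) separately, since then $\Delta_{g,w}^{\mathrm{mlw}} = \Delta_{g,w}^{\mathrm{lw}}$ is a single point and one has to check directly that $\Delta_{1,w}$ itself is either a point or has trivially trivial $\pi_1$ in that small range; the example $\Delta_{1,(\varepsilon^{(3)})} \cong S^2$ computed earlier is reassuring. Otherwise, the argument is a clean assembly of the two contractibility theorems together with the combinatorial observation about 1- and 2-edge $w$-stable graphs.
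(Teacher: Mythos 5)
Your proposal is correct and follows essentially the same route as the paper: the $1$-skeleton lies in the contractible subcomplex $\Delta_{g,w}^{\mathrm{mlw}}$, and the dimension-one cellular approximation theorem for symmetric $\Delta$-complexes (Theorem 3.1 of \cite{allcock}) shows that $\pi_1(\Delta_{g,w})$ is generated by loops in the $1$-skeleton, which are therefore null-homotopic. Your worry about the case $g=1$ with $\sum w_i \leq 1$ is unnecessary: there $\Delta_{g,w}^{\mathrm{lw}}$ is a single point but $\Delta_{g,w}^{\mathrm{mlw}}$ is generally strictly larger (it contains the multi-edge cycle graphs), and the iterated mapping-cone argument still yields its contractibility, so the argument applies uniformly without a separate case.
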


\begin{proof}
Observe the 1-skeleton of $\Delta_{g,w}$ is contained in the contractible subcomplex $\Delta_{g,w}^\mathrm{mlw}$. By Theorem 3.1 of \cite{allcock}, there is a surjection from $\pi_1(\Delta_{g,w}^{(1)},x) \to \pi_1(\Delta_{g,w},x)$; that is, $\pi_1(\Delta_{g,w},x)$ is generated by loops in the $1$-skeleton. Since $\Delta_{g,w}^{(1)}$ is contained in a contractible subcomplex, all loops in $\Delta_{g,w}^{(1)}$ are homotopically trivial in $\Delta_{g, w}$, and we conclude that $\pi_1(\Delta_{g,w},x)$ is trivial.
\end{proof}

\section{The Euler characteristic of $\Delta_{g, w}$}
\label{sec:euler}
    Let $M_{g, w}$ be the coarse moduli space of $\mathcal{M}_{g, w}$. In this section we exhibit a useful decomposition of the class $[M_{g,w}]$ in terms of classes $[M_{g, r}]$ in the Grothendieck group of varieties. Using the fact that the virtual Poincar\'{e} polynomial is an Euler-Poincar\'{e} characteristic, this allows us to deduce the formula of Theorem \ref{thm:main-euler}.
    
    \subsection{The Grothendieck group of varieties and Euler-Poincar\'{e} characteristics}
    Let $k$ be a field. We denote by $K_0(\Var/k)$ the \textit{Grothendieck group of varieties}. This group is the quotient of the free abelian group on $k$-varieties by relations of the form
    \[
    [X] = [X \smallsetminus Y] + [Y],
    \] when $Y$ is a closed subvariety of $X$. Such relation are called the \textit{cut-and-paste} relations. The additive identity is $[\varnothing]$. An \textit{Euler-Poincar\'{e} characteristic} of $K_{0}(\Var/k)$ is a group homomorphism 
    \[
    \chi: K_{0}(\Var/k) \to A
    \] to an abelian group $A$. That is, for any closed subvariety $Y$ of $X$, 
    \[
    \chi([X]) = \chi([Y]) + \chi([X \smallsetminus Y]).  
    \]
    See \cite{Craw2004AnIT, loeser2009seattle}. 
    Specializing to $k = \C$, one example of an Euler-Poincar\'{e} characteristic is given by the \textit{virtual Poincar\'{e} polynomial}, which is the group homomorphism ${K_{0}(\Var/\C) \to \Z[t]}$ defined by the formula
     \[P_X(t) = \sum_{m = 0}^{2d} (-1)^m \chi^m_c (X) t^m, \]
    where $d = \dim X$ and
    \[\chi^m_c(X) := \sum_{j = 0}^{2d}(-1)^j \dim \gr^W_{m} H^j_c(X; \Q).\] 
    %Note that $P_X(1) = \chi_{c}(X)$, the usual compactly supported Euler characteristic of $X$, which is known to be additive with respect to the cut-and-paste relation. (I don't think this is true because of the coefficient (-1)^m)
    \subsection{The stratification of $M_{g,w}$} Let $g \ge 0$, $n \ge 1$ and $w \in (\Q \cap (0, 1])^n$ such that 
    \[
    2g - 2 + \sum_{i=1}^n w_i > 0. 
    \]
    % We construct a simplicial complex $\Sigma(w)$ with vertices $[n]$ and a vertex set $S \subseteq [n]$ forms a $(|S|-1)$ dimensional simplex if and only if $\sum_{i \in S} w_i \le 1$. 
    % We call a collection $\calC$ of faces in a simplicial complex $\Sigma$ \textbf{spanning} if the vertex set, i.e. the set of all $0$-skeleta, of $\calC$ is the vertex set in the $\Sigma$. 
    % We call a collection $\calC$ of faces \textbf{disjoint} if $F_1 \cap F_2 =\varnothing$ for all $F_1, F_2 \in \calC$. 
    % The spanning, disjoint collections of faces  a simplicial complex on $[n]$ give partitions of $[n]$ (the vertex sets of the faces are parts of a partition); but not every partition of $[n]$ can be obtained from a face collection. 
    % Let $N_{k, w}$ denote the number of spanning, disjoint collection of faces in $\Sigma(w)$ of size $k$. 
    
    \noindent To describe a stratification of $M_{g,w}$, we say that a set partition of $[n]$
    \[ \calP = P_1 \sqcup \cdots \sqcup P_r \vdash [n] \]
    is \textbf{\textit{$w$-admissible}} if $ \sum_{i \in P_j} w_i \leq 1  $
    for all $1 \leq j \leq r$. Given such a partition, we write $\mathcal{P} \vdash_w [n]$. We set $N_{r, w}$ to be the number of $w$-admissible partitions of $[n]$ with $r$ parts.
    
    \begin{prop}
    \label{prop:Mgw-cut-and-paste}In the Grothendieck group of $k$-varieties $K_{0}(\Var/k)$,
        \[
        [M_{g,w}] = \sum_{r = 1}^{n} N_{r, w} [M_{g, r}].
        \]
    \end{prop}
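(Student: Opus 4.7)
The plan is to produce the claimed identity by stratifying $M_{g,w}$ according to the coincidence pattern of the marked points, and then identifying each stratum, up to isomorphism of varieties, with $M_{g,r}$ for an appropriate $r$.

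First, for every $w$-admissible set partition $\calP = P_1 \sqcup \cdots \sqcup P_r \vdash_w [n]$, I would define a subset
\[
M_{g,w,\calP} \subseteq M_{g,w}
\]
as the locus of isomorphism classes of tuples $(C, p_1,\ldots,p_n)$ such that $p_i = p_j$ if and only if $i$ and $j$ lie in the same part of $\calP$. The condition ``$p_i=p_j$'' is closed on the moduli space, and the condition ``$p_i \neq p_j$'' is open, so $M_{g,w,\calP}$ is the intersection of an open and a closed subvariety of $M_{g,w}$, hence locally closed. Moreover, the $w$-stability of a smooth $w$-stable marked curve forces the coincidence partition to be $w$-admissible, so as $\calP$ ranges over $w$-admissible partitions, the $M_{g,w,\calP}$ give a stratification of $M_{g,w}$ into pairwise disjoint locally closed subvarieties whose union is all of $M_{g,w}$.

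Next I would identify each stratum with an unweighted moduli space. Fix a $w$-admissible partition $\calP$ with $r$ parts, and for each tuple $(C,p_1,\ldots,p_n) \in M_{g,w,\calP}$, let $q_j$ be the common value of $\{p_i : i \in P_j\}$. Sending $(C,p_1,\ldots,p_n)$ to $(C,q_1,\ldots,q_r)$ defines a morphism $M_{g,w,\calP}\to M_{g,r}$; the inverse sends $(C,q_1,\ldots,q_r)$ to the tuple obtained by labeling $q_j$ by all indices in $P_j$. One needs to check that this gives an isomorphism of varieties (not just a bijection on points), which follows from the modular interpretation together with Hassett's construction of $\overline{\calM}_{g,w}$ as representing the corresponding moduli functor; the restriction to the stratum where the coincidence pattern is exactly $\calP$ is manifestly represented by the functor for smooth $r$-marked curves.

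Finally I would assemble the conclusion by iterating the cut-and-paste relations in $K_0(\Var/k)$. Choosing any ordering of the finitely many strata which is compatible with the closure order (so closed strata are removed first), each step writes $[X]=[X\smallsetminus Y]+[Y]$ for a closed stratum $Y$, and after exhausting all strata one obtains
\[
[M_{g,w}] \;=\; \sum_{\calP\vdash_w [n]} [M_{g,w,\calP}] \;=\; \sum_{\calP\vdash_w [n]} [M_{g,|\calP|}] \;=\; \sum_{r=1}^{n} N_{r,w}\,[M_{g,r}],
\]
where the last equality collects partitions by their number of parts.

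The main technical point, and the place where one must be careful, is the identification of the stratum $M_{g,w,\calP}$ with $M_{g,r}$ as a variety rather than merely as a set; everything else reduces to standard cut-and-paste. Given Hassett's functorial description of $\overline{\calM}_{g,w}$, the restriction to the open substack $\calM_{g,w}$ and then to the coincidence stratum indexed by $\calP$ makes this identification essentially formal, so I do not expect a genuine obstacle, only bookkeeping.
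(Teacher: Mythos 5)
Your proposal is correct and follows essentially the same route as the paper: stratify $M_{g,w}$ by the coincidence pattern of the marked points into locally closed strata $Z_{\calP}\cong M_{g,|\calP|}$ indexed by $w$-admissible partitions, then apply the cut-and-paste relations and collect terms by the number of parts. The paper simply cites a standard reference for the additivity over a locally closed stratification where you iterate the relations by hand, but the argument is the same.
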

    
    \begin{proof}
        The locus $M_{g,w}$ parameterizes irreducible smooth curves of genus $g$ with $n$ markings, such that whenever $\sum_{i \in S} w_i \le 1$ for some $S \subseteq [n]$, the markings indexed by $S$ are allowed to coincide. 
        Given a $w$-admissible partition \[ \calP = P_1 \sqcup \cdots \sqcup P_r \vdash_{w} [n], \] 
        we define
        \[Z_{\calP} := \{(C, p_1, \ldots, p_n) \in M_{g, w} \mid p_i = p_j \text{ if and only if }i, j \in P_s \text{ for some } s \in [r]\}.   \]
        Then $Z_{\calP} \cong M_{g, r}$. As $\calP$ ranges over all $w$-admissible partitions of $[n]$, the loci $Z_{\calP}$ form a locally closed stratification of $M_{g, w}$. Hence in the Grothendieck group, by \cite[Proposition 1.1]{mustata}, we have
        \begin{align*}
            [M_{g, w}] = \sum_{\mathcal{P} \vdash_w [n]} [Z_{\mathcal{P}}] = \sum_{r = 1}^{n} \sum_{\substack{ \mathcal{P} \vdash_w [n] \\|\mathcal{P}| = r}} [Z_\mathcal{P}] = \sum_{r = 1}^{n} N_{r, w}[M_{g, r}],
        \end{align*}
        as claimed.
    \end{proof}
    
    \begin{rem}
        In \cite[Section 4]{Bini2005EulerCO}, Bini and Harer computed the Euler characteristics of $\calM_{g, n}$ for $2g -2 + n  > 0$. Our decomposition in Proposition \ref{prop:Mgw-cut-and-paste} can be used to give the Euler characteristic of $\calM_{g, w}$ in terms of those of $\calM_{g, r}$ for $0 < r \leq n$. 
    \end{rem}
    
    We now record two corollaries which amount to the calculation of the numbers $N_{r, w}$ for special values of $w$.
    \begin{cor} 
        Let $w$ be \textit{heavy/light}, i.e. $w = (1^{(n)}, \varepsilon^{(m)})$ for $m \ge 2$ and $0 < \varepsilon < 1/m$ satisfying $2g - 2 + n \ge 0.$
        Then 
        \[
        [M_{g,w}] = \sum_{r = 1}^{m} S(m, r) [M_{g, n  + r}],
        \] where $S(m, r)$ is the \textit{Stirling number of the second kind}, or the number of $r$-partitions of a $m$-set.  
    \end{cor}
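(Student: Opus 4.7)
The plan is to apply Proposition \ref{prop:Mgw-cut-and-paste} directly, reducing the statement to a combinatorial enumeration of $w$-admissible partitions for the specific heavy/light weight vector $w = (1^{(n)}, \varepsilon^{(m)})$.

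First, I would label the marked points so that $\{1,\dots,n\}$ corresponds to the $n$ heavy coordinates (weight $1$) and $\{n+1,\dots,n+m\}$ to the $m$ light coordinates (weight $\varepsilon$). The key observation is a structural description of $w$-admissible partitions of $[n+m]$. If a part $P_j$ contains any heavy index $i \le n$, then $\sum_{k \in P_j} w_k \ge 1$, and since every other weight is strictly positive, admissibility forces $P_j = \{i\}$. So each heavy index must sit in its own singleton part. Conversely, any part consisting solely of light indices has total weight at most $m\varepsilon < 1$, so no restriction is placed on how the light indices are grouped.

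Thus every $w$-admissible partition of $[n+m]$ is obtained uniquely by taking the $n$ singleton parts $\{1\},\dots,\{n\}$ together with an arbitrary (unordered) set-partition of $\{n+1,\dots,n+m\}$ into, say, $r$ nonempty blocks, yielding a partition with a total of $n+r$ parts. Since the number of set-partitions of an $m$-set into $r$ nonempty blocks is by definition $S(m,r)$, I conclude that
\[
N_{j, w} = \begin{cases} S(m, j - n) & \text{if } n+1 \le j \le n+m, \\ 0 & \text{otherwise.} \end{cases}
\]

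Substituting this into the formula of Proposition \ref{prop:Mgw-cut-and-paste} gives
\[
[M_{g,w}] = \sum_{j=1}^{n+m} N_{j,w}\, [M_{g,j}] = \sum_{r=1}^{m} S(m,r)\, [M_{g, n+r}],
\]
which is exactly the claimed identity. There is no genuine obstacle here: the only point requiring a brief justification is the structural claim that each heavy index must occupy its own singleton block, which follows immediately from the inequality $m\varepsilon < 1$ and the fact that the heavy weights equal $1$. The condition $2g - 2 + n \ge 0$ enters only to ensure that $M_{g,n+r}$ is a nonempty moduli space of smooth curves for all $r \ge 1$.
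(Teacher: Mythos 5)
Your proposal is correct and matches the paper's intent exactly: the corollary is stated in the paper as an immediate consequence of Proposition \ref{prop:Mgw-cut-and-paste}, with no written proof, and your computation of $N_{j,w}$ (heavy indices forced into singletons, light indices partitioned freely since $m\varepsilon<1$) is precisely the enumeration the paper leaves implicit.
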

    
    Furthermore, the $m$-restricted Stirling number of the second kind for $n, r \ge 1$ is defined to be the number of partitions of an $n$-set into $r$ nonempty subsets, each of which has at most $m$ elements, and is denoted by \[\begin{Bmatrix}
        n\\
        r
        \end{Bmatrix}_{\leq m}.\] For the generating function and other recurrence relations of the $m$-restricted Stirling numbers; see \cite{KOMATSU16}. Then we have the following. 
    \begin{cor}
    \label{cor:m-res Stirling}
        Let $w=((1/m)^{(n)})$ for $n > m > 1$ such that $2g - 2 + n/m > 0.$ Then \[[M_{g,w}] = \sum_{r=\left\lceil \frac{n}{m}\right\rceil}^{n} \begin{Bmatrix}
        n\\
        r
        \end{Bmatrix}_{\leq m}[M_{g,r}].\]
    \end{cor}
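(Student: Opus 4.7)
The plan is to apply Proposition \ref{prop:Mgw-cut-and-paste} directly and then identify the coefficients $N_{r,w}$ with the $m$-restricted Stirling numbers. First I would observe that for the uniform weight vector $w = ((1/m)^{(n)})$, a set partition $\calP = P_1 \sqcup \cdots \sqcup P_r \vdash [n]$ is $w$-admissible if and only if
\[
\sum_{i \in P_j} w_i = \frac{|P_j|}{m} \le 1 \quad \text{for all } 1 \le j \le r,
\]
which is equivalent to the condition $|P_j| \le m$ for every part $P_j$. Therefore $N_{r,w}$ is exactly the number of partitions of $[n]$ into $r$ nonempty blocks, each of cardinality at most $m$, that is, $\begin{Bmatrix} n \\ r \end{Bmatrix}_{\le m}$.

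Next, I would justify the range of summation. By a pigeonhole argument, any partition of $[n]$ whose blocks all have size at most $m$ must contain at least $\lceil n/m \rceil$ blocks, so $N_{r,w} = 0$ whenever $r < \lceil n/m \rceil$. Hence the nonvanishing terms in Proposition \ref{prop:Mgw-cut-and-paste} are precisely those indexed by $r$ in the range $\lceil n/m \rceil \le r \le n$. Substituting these coefficients into the formula
\[
[M_{g,w}] = \sum_{r=1}^{n} N_{r,w}\, [M_{g,r}]
\]
yields exactly the claimed identity.

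There is no real obstacle here; the corollary is essentially a matter of parsing definitions and checking the admissibility condition for the special weight vector $w = ((1/m)^{(n)})$. The only point requiring minor care is confirming the lower bound $\lceil n/m \rceil$ on the summation index, which follows from the pigeonhole principle as above.
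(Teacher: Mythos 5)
Your proof is correct and is exactly the argument the paper intends (the paper states this as an immediate corollary of Proposition \ref{prop:Mgw-cut-and-paste} without writing out the details): admissibility for $w=((1/m)^{(n)})$ reduces to each block having size at most $m$, so $N_{r,w}$ is the $m$-restricted Stirling number, and the pigeonhole bound gives the summation range.
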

    % if $n \leq m$, we are back in the heavy/light case.
    \begin{cor}
        Let $w = ((1/2)^{(n)})$ for $n \ge 1$ such that 
        $2g - 2 + n/2 > 0$. 
        Then 
        \[[M_{g,w}] = \sum_{r=\left\lceil \frac{n}{2}\right\rceil}^{n} \frac{\prod\limits_{i=0}^{n-r-1}\binom{n-2i}{2}}{(n-r)!} [M_{g,r}].
        \]
    \end{cor}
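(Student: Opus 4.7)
The plan is to apply Proposition \ref{prop:Mgw-cut-and-paste} directly, so the only real task is to compute $N_{r,w}$ when $w = ((1/2)^{(n)})$. Since each weight is $1/2$, a subset $S \subseteq [n]$ satisfies $\sum_{i \in S} w_i \leq 1$ if and only if $|S| \leq 2$. Hence a set partition $\calP \vdash [n]$ is $w$-admissible if and only if every block of $\calP$ has size $1$ or $2$.

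Next I would count $w$-admissible partitions with exactly $r$ blocks. If such a partition has $k$ blocks of size $2$ and $r-k$ blocks of size $1$, then $2k + (r-k) = n$, forcing $k = n-r$ (and thereby $r \geq \lceil n/2 \rceil$, matching the lower limit of summation). I would count these by choosing the $n-r$ unordered pairs one at a time: the number of ordered sequences of $n-r$ disjoint $2$-subsets of $[n]$ is $\prod_{i=0}^{n-r-1}\binom{n-2i}{2}$, and dividing by $(n-r)!$ to account for the ordering among the pairs gives
\[
N_{r,w} \;=\; \frac{\prod_{i=0}^{n-r-1}\binom{n-2i}{2}}{(n-r)!}.
\]
(The remaining $2r - n$ elements automatically form singleton blocks, so no additional factor appears.)

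Finally, I would substitute this expression for $N_{r,w}$ into the formula of Proposition \ref{prop:Mgw-cut-and-paste}, noting that $N_{r,w} = 0$ for $r < \lceil n/2 \rceil$, which yields the stated range of summation. The only step that requires any care is checking that the overcounting factor is exactly $(n-r)!$ and not something involving the singletons as well; this is immediate because the singleton blocks are distinguished by the elements they contain, so no further symmetry needs to be quotiented out. I expect no real obstacle here since everything reduces to the combinatorial identification of $w$-admissibility with the ``matchings plus singletons'' structure on $[n]$.
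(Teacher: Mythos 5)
Your proposal is correct and matches the paper's argument: the paper likewise observes that a $((1/2)^{(n)})$-admissible partition with $r$ parts consists of exactly $n-r$ two-element blocks and singletons otherwise, and obtains the same count $\prod_{i=0}^{n-r-1}\binom{n-2i}{2}/(n-r)!$ for the $2$-restricted Stirling number. The only cosmetic difference is that the paper substitutes this count into its intermediate corollary on $m$-restricted Stirling numbers rather than directly into Proposition \ref{prop:Mgw-cut-and-paste}, which amounts to the same thing.
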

        
    \begin{proof}
    A $((1/2)^{(n)})$-admissible partition having $r$ parts must consist of exactly $(n-r)$ subsets of size two and singletons otherwise. Therefore, we have \[\begin{Bmatrix}
    n\\
    r
    \end{Bmatrix}_{\leq 2} = \frac{\prod\limits_{i=0}^{n-r-1}\binom{n-2i}{2}}{(n-r)!}.\] Plug this into Corollary \ref{cor:m-res Stirling}, we obtain the desired expression.
    \end{proof}

    \subsection{The Euler characteristics of $\Delta_{g, w}$ and $\calM_{g,w}$}
    We can now exploit the additivity of Euler-Poincar\'{e} characteristics and the connection between $\calM_{g,w}$ and $\Delta_{g, w}$ to prove Theorem \ref{thm:main-euler}. For a complex algebraic variety (or stack) $X$ of dimension $d$, let $\chi^\mathrm{tw}$ be the top weight Euler characteristic, defined as 
    \[
    \chi^\mathrm{tw}(X) := \sum_{i= 0}^{2d} (-1)^{i} \dim \gr^{W}_{2d}H^{i}(X; \Q), 
    \]
    and for any space $Y$, let $\widetilde{\chi}(Y)$ be the reduced Euler characteristic. Recall from ~\cite[Theorem 5.8]{CGP1} that the isomorphism
    \[ \gr^{W}_{6g - 6 + 2n}H^{6g - 6 + 2n - k}(\mathcal{M}_{g, n} ; \Q) \cong \widetilde{H}_{k - 1}(\Delta_{g, n} ; \Q) \]
    is a special case of the isomorphism
    \[\gr^W_{2d}H^{2d - k}(\mathcal{X} ; \Q) \cong \widetilde{H}_{k - 1} (\Delta(\mathcal{X} \subset \overline{\mathcal{X}} ); \Q) \]
    whenever $\mathcal{X}$ is a smooth and separated $d$-dimensional DM stack over $\C$, $\overline{\mathcal{X}}$ is a smooth normal crossings of $\mathcal{X}$, and $\Delta(\mathcal{X} \subset \overline{\mathcal{X}})$ is the dual complex of the normal crossings divisor $\overline{\mathcal{X}} \smallsetminus \mathcal{X}$.
    \begin{lem}
    \label{lem:top-weight-and-reduced-euler}
    Let $\mathcal{X}$ be a smooth, separated DM stack over $\C$ and let $\overline{\mathcal{X}}$ be a smooth normal crossings compactification of $\mathcal{X}$. Then 
        \[
        \chi^\mathrm{tw}(\mathcal{X}) = - \widetilde{\chi}(\Delta(\mathcal{X} \subset \overline{\mathcal{X}})).
        \]
    \end{lem}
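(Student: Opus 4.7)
The plan is to deduce this directly from the stated isomorphism
\[\gr^W_{2d} H^{2d-k}(\mathcal{X}; \Q) \cong \widetilde{H}_{k-1}(\Delta(\mathcal{X} \subset \overline{\mathcal{X}}); \Q),\]
by expanding the definition of $\chi^{\mathrm{tw}}$ and reindexing. There is no substantive geometric content beyond what the isomorphism already encodes; the entire argument is a bookkeeping exercise with signs.

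First I would write out $\chi^{\mathrm{tw}}(\mathcal{X}) = \sum_{i=0}^{2d}(-1)^i \dim \gr^W_{2d} H^i(\mathcal{X}; \Q)$ and substitute $i = 2d-k$, which ranges over $0 \le k \le 2d$. This converts the sum into $\sum_{k=0}^{2d}(-1)^{2d-k}\dim \gr^W_{2d} H^{2d-k}(\mathcal{X};\Q) = \sum_{k=0}^{2d}(-1)^k \dim \widetilde{H}_{k-1}(\Delta(\mathcal{X}\subset\overline{\mathcal{X}}); \Q)$ using the isomorphism (and since $(-1)^{2d} = 1$). Next I would reindex once more by setting $j = k - 1$, so that $k = j+1$ ranges from $-1$ to $2d - 1$. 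The $k=0$ term contributes $\dim \widetilde{H}_{-1}(\Delta; \Q)$, which is zero when $\Delta$ is nonempty and equals $1$ otherwise; either way it is incorporated correctly in the definition of $\widetilde{\chi}$. The sum then becomes $\sum_j (-1)^{j+1} \dim \widetilde{H}_j(\Delta; \Q) = -\widetilde{\chi}(\Delta(\mathcal{X}\subset\overline{\mathcal{X}}))$.

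There is no genuine obstacle; the only mildly delicate points are confirming that the isomorphism applies in every bidegree (including boundary cases where one side is zero for trivial reasons) and checking the convention for $\widetilde{H}_{-1}$ of the empty complex so that the edge case $\mathcal{X} = \overline{\mathcal{X}}$ (where $\Delta = \varnothing$ and $\chi^{\mathrm{tw}}(\mathcal{X}) = 0$ since there is nothing in weight $2d$ above degree $2d$) is handled consistently.
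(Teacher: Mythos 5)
Your proof is correct and is essentially identical to the paper's: both arguments simply substitute the isomorphism $\gr^W_{2d}H^{2d-k}(\mathcal{X};\Q)\cong\widetilde{H}_{k-1}(\Delta(\mathcal{X}\subset\overline{\mathcal{X}});\Q)$ into the definition of $\chi^{\mathrm{tw}}$ and reindex to pick up the overall minus sign. Your extra care with the $\widetilde{H}_{-1}$ convention and the empty-boundary edge case goes slightly beyond what the paper writes down, but changes nothing substantive.
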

    
    \begin{proof}
       Let $d = \dim \mathcal{X}$. We have
        \begin{align*}
            \chi^\mathrm{tw}(\mathcal{X}) &= \sum_{i= 0}^{2d} (-1)^{i} \dim \gr^{W}_{2d}H^{i}(\mathcal{X}; \Q) \\
            &= \sum_{i = 0}^{2d} (-1)^{i} \dim \widetilde{H}_{2d-i-1}(\Delta(\mathcal{X} \subset \overline{\mathcal{X}}); \Q) \\
            &= -\widetilde{\chi}(\Delta(\mathcal{X} \subset \overline{\mathcal{X}})). 
        \end{align*}
    \end{proof}
    
   We now observe that the weight $0$ compactly supported Euler characteristic is a motivic invariant; indeed the following lemma is proven upon realizing that for a complex algebraic variety $X$, we have that $\chi^{0}_c(X) = P_X(0)$, where $P_X$ is the virtual Poincar\'{e} polynomial.
    
    \begin{lem}\label{lem:TopWeightAdditive}
      The weight $0$ compactly supported Euler characteristic
      \[\chi^{0}_c : K_0(\mathrm{Var}/\C) \to \Z  \]
      is an Euler-Poincar\'{e} characteristic.
    \end{lem}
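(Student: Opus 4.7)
The plan is to reduce the lemma to the already-stated fact that the virtual Poincaré polynomial $P_X(t)$ is an Euler-Poincaré characteristic, via the observation (flagged in the paragraph preceding the lemma) that $\chi^0_c(X) = P_X(0)$. Once this identity is in hand, the lemma will be immediate: evaluation at $t=0$ is a ring homomorphism $\mathrm{ev}_0 \colon \Z[t] \to \Z$, hence a group homomorphism, and the composition of two group homomorphisms is a group homomorphism.

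First I would verify the identity $\chi^0_c(X) = P_X(0)$ directly from the definitions recalled in the paper. By definition,
\[ P_X(t) = \sum_{m=0}^{2d}(-1)^m \chi^m_c(X)\, t^m, \]
so substituting $t=0$ kills every term except $m=0$, yielding $P_X(0) = \chi^0_c(X)$. This uses only the definition given above the lemma and requires no further input.

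Next I would conclude: the virtual Poincaré polynomial is asserted (in the paragraph introducing it) to give a well-defined homomorphism $K_0(\mathrm{Var}/\C) \to \Z[t]$, meaning that for any closed subvariety $Y \subseteq X$ one has $P_X(t) = P_Y(t) + P_{X \smallsetminus Y}(t)$. Evaluating this identity at $t=0$ gives $\chi^0_c(X) = \chi^0_c(Y) + \chi^0_c(X \smallsetminus Y)$, which is exactly the cut-and-paste relation. Equivalently, $\chi^0_c = \mathrm{ev}_0 \circ P_\bullet$ is a composition of group homomorphisms, hence a group homomorphism, i.e.\ an Euler-Poincaré characteristic on $K_0(\mathrm{Var}/\C)$.

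There is no real obstacle here; the only point that merits care is checking that $\chi^m_c$ as defined (the alternating sum of dimensions of the $m$-th graded piece of the weight filtration on compactly supported cohomology) really is the coefficient appearing in $P_X$, so that the substitution $t=0$ singles out $\chi^0_c$. Since this is definitional, the proof is essentially a one-line observation.
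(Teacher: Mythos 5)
Your proposal is correct and is exactly the paper's argument: the paper proves the lemma by the same observation that $\chi^0_c(X) = P_X(0)$ and that the virtual Poincar\'{e} polynomial is already an Euler--Poincar\'{e} characteristic, so $\chi^0_c$ is the composition with evaluation at $t=0$. You have simply spelled out the details the paper leaves implicit.
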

    We can now prove Theorem \ref{thm:main-euler}, restated here.
    \begin{thm1.2}
    \textit{Let $W = W_1 \subset \cdots \subset W_{6g-6+2r} \subseteq H^{\ast}(\calM_{g, r}; \Q)$ be the weight filtration of the rational singular cohomology of the moduli stack $\calM_{g, r}$ and denote by $\chi^{W}_{6g-6+2r}$ the Euler characteristic of the top graded piece
    \[\gr^W_{6g - g + 2r}H^*(\mathcal{M}_{g, r}; \Q) = W_{6g - 6 + 2r}/W_{6g - 7 + 2r}  \]
    of the weight filtration. Then
    \[\chi(\Delta_{g, w}) = 1 -  \sum_{r = 1}^{n} N_{r, w} \cdot \chi^{W}_{6g - 6 + 2r} (\mathcal{M}_{g, r}).  \]}
    \end{thm1.2}
    
    \begin{proof}[Proof of Theorem \ref{thm:main-euler}]
    By Proposition \ref{prop:Mgw-cut-and-paste} and Lemma \ref{lem:TopWeightAdditive}, we have
    \[\chi^0_c(M_{g,w}) = \sum_{r = 1}^{n} N_{r, w} \chi^0_c(M_{g,r}). \]
    By \cite[Proposition 36]{behrend2004cohomology} and \cite[Theorem 4.40]{edidin2010equivariant}, 
    the coarse moduli scheme $\mathcal{X} \to X$ of a DM stack $\mathcal{X}$ induces an isomorphism of rational cohomology $H^{\ast}(\calX; \Q) \cong H^{\ast}(X; \Q)$, which is an isomorphism of mixed Hodge structures.
    Therefore, $\chi_c^0(\calX) = \chi_c^0(X)$ and $\chi^{\mathrm{tw}}(\calX) = \chi^{\mathrm{tw}}(X)$.
    For a smooth Deligne-Mumford stack $\calX$ of dimension $d$, the Poincar\'{e} duality pairing \[H^j_c(\calX; \Q) \times H^{2d - j}(\calX; \Q) \to \Q\] induces a perfect pairing of graded pieces
    \[ \gr_{m}^W H^{j}_c(\calX; \Q) \times \gr_{2d - m}^{W}H^{2d - j}(\calX; \Q) \to \Q  \]
    for $0 \leq m \leq 2j$; see ~\cite[Theorem 6.23]{peters2008mixed}. Thus we can write
    \begin{align*}
        \chi^0_c(\calX) &= \sum_{j = 0}^{2d}(-1)^j \dim \gr^W_{2d} H^{2d - j}(\calX; \Q) \\&= \sum_{j = 0}^{2d}(-1)^{2d - j} \dim \gr^W_{2d} H^{2d - j}(\calX; \Q).
    \end{align*}
    In particular, it follows that
    \[ \chi^0_c(\calX) = \chi^\mathrm{tw}(\calX). \]
    Since $\calM_{g, r}$ and $\calM_{g, w}$ are smooth Deligne-Mumford stacks and $\Delta_{g, w} = \Delta(\calM_{g, w} \subset \overline{\calM}_{g, w})$, the result now follows from Lemma \ref{lem:top-weight-and-reduced-euler} and the fact that $\widetilde{\chi}(\Delta_{g, w}) = \chi(\Delta_{g, w}) - 1$.
    \end{proof}

\bibliographystyle{alpha}
\bibliography{bibliography}

\newcommand{\etalchar}[1]{$^{#1}$}
\begin{thebibliography}{CHMR14}

\bibitem[ACP19]{allcock}
Daniel {Allcock}, Daniel {Corey}, and Sam {Payne}.
\newblock {Tropical moduli spaces as symmetric Delta-complexes}.
\newblock {\em arXiv e-prints}, page arXiv:1908.08171, August 2019.

\bibitem[AK06]{ARDILA200638}
Federico Ardila and Caroline~J. Klivans.
\newblock The {B}ergman complex of a matroid and phylogenetic trees.
\newblock {\em Journal of Combinatorial Theory, Series B}, 96(1):38 -- 49,
  2006.

\bibitem[Apo98]{apostol1998introduction}
T.M. Apostol.
\newblock {\em Introduction to Analytic Number Theory}.
\newblock Undergraduate Texts in Mathematics. Springer New York, 1998.

\bibitem[Beh04]{behrend2004cohomology}
Kai Behrend.
\newblock Cohomology of stacks.
\newblock {\em Intersection theory and moduli, ICTP Lect. Notes}, 19:249--294,
  2004.

\bibitem[BH05]{Bini2005EulerCO}
G.~Bini and J.~Harer.
\newblock Euler characteristics of moduli spaces of curves.
\newblock {\em Journal of the European Mathematical Society}, 13:487--512,
  2005.

\bibitem[CFGP19]{CFGP}
Melody {Chan}, Carel {Faber}, S{\o}ren {Galatius}, and Sam {Payne}.
\newblock {The $S_n$-equivariant top weight Euler characteristic of
  $\mathcal{M}_{g,n}$}.
\newblock {\em arXiv e-prints}, page arXiv:1904.06367, April 2019.

\bibitem[CGP19]{CGP2}
Melody {Chan}, S{\o}ren {Galatius}, and Sam {Payne}.
\newblock {Topology of moduli spaces of tropical curves with marked points}.
\newblock {\em arXiv e-prints}, page arXiv:1903.07187, Mar 2019.

\bibitem[CGP21]{CGP1}
Melody {Chan}, S{\o}ren {Galatius}, and Sam {Payne}.
\newblock {Tropical curves, graph homology, and top weight cohomology of
  $\mathcal{M}_g$}.
\newblock {\em J. Amer. Math. Soc.}, 34(2):565--594, 2021.

\bibitem[{Cha}21]{chan2015topology}
Melody {Chan}.
\newblock {Topology of the tropical moduli spaces $M_{2,n}$}.
\newblock {\em Beiträge zur Algebra und Geometrie}, pages 1--25, 2021.

\bibitem[CHMR14]{CHMR2014moduli}
Renzo Cavalieri, Simon Hampe, Hannah Markwig, and Dhruv Ranganathan.
\newblock Moduli spaces of rational weighted stable curves and tropical
  geometry.
\newblock {\em Forum of Mathematics, Sigma}, 4, 04 2014.

\bibitem[CMP{\etalchar{+}}20]{CMPRS}
Alois Cerbu, Steffen Marcus, Luke Peilen, Dhruv Ranganathan, and Andrew Salmon.
\newblock Topology of tropical moduli of weighted stable curves.
\newblock {\em Adv. Geom.}, 20(4):445--462, 2020.

\bibitem[Cra04]{Craw2004AnIT}
Alastair Craw.
\newblock An introduction to motivic integration.
\newblock {\em arXiv: Algebraic Geometry}, pages 203--225, 2004.

\bibitem[CV86]{Culler1986ModuliOG}
M.~Culler and K.~Vogtmann.
\newblock Moduli of graphs and automorphisms of free groups.
\newblock {\em Inventiones mathematicae}, 84:91--119, 1986.

\bibitem[Del74]{deligne1971theorie3}
Pierre Deligne.
\newblock Th\'eorie de hodge : Iii.
\newblock {\em Publications Math\'ematiques de l'IH\'ES}, 44:5--77, 1974.

\bibitem[Edi10]{edidin2010equivariant}
Dan Edidin.
\newblock Equivariant geometry and the cohomology of the moduli space of
  curves.
\newblock {\em Handbook of Moduli, edited by G. Farkas and I. Morrison}, 06
  2010.

\bibitem[Har81]{harvey1981boundary}
W.~J. Harvey.
\newblock Boundary structure of the modular group.
\newblock In {\em Riemann surfaces and related topics: {P}roceedings of the
  1978 {S}tony {B}rook {C}onference ({S}tate {U}niv. {N}ew {Y}ork, {S}tony
  {B}rook, {N}.{Y}., 1978)}, volume~97 of {\em Ann. of Math. Stud.}, pages
  245--251. Princeton Univ. Press, Princeton, N.J., 1981.

\bibitem[Har86]{Harer}
John Harer.
\newblock The virtual cohomology dimension of the mapping class group of an
  orientable surface.
\newblock {\em Inventiones mathematicae}, 84:157--176, 02 1986.

\bibitem[Has03]{HASSETT2003316}
Brendan Hassett.
\newblock Moduli spaces of weighted pointed stable curves.
\newblock {\em Advances in Mathematics}, 173(2):316 -- 352, 2003.

\bibitem[Hat95]{hatcher1995}
Allen Hatcher.
\newblock Homological stability for automorphism groups of free groups.
\newblock {\em Commentarii Mathematici Helvetici}, 70(1):39–62, Dec 1995.

\bibitem[KLM16]{KOMATSU16}
Takao Komatsu, K\'{a}lm\'{a}n Liptai, and Istv\'{a}n Mez\H{o}.
\newblock Incomplete poly-{B}ernoulli numbers associated with incomplete
  {S}tirling numbers.
\newblock {\em Publ. Math. Debrecen}, 88:357--368, 2016.

\bibitem[KLSY20]{kannan2020topology}
Siddarth {Kannan}, Shiyue {Li}, Stefano {Serpente}, and Claudia~He {Yun}.
\newblock {Topology of tropical moduli spaces of weighted stable curves in
  higher genus}.
\newblock {\em arXiv e-prints}, page arXiv:2010.11767, October 2020.

\bibitem[Loe09]{loeser2009seattle}
Fran{\c{c}}ois Loeser.
\newblock Seattle lectures on motivic integration.
\newblock {\em Algebraic geometry-Seattle 2005}, 80:745--784, 2009.

\bibitem[Mus]{mustata}
Mircea Mustaţă.
\newblock {L}ecture 8. the {G}rothendieck ring of varieties and {K}apranov’s
  motivic zeta function.

\bibitem[PS08]{peters2008mixed}
C.A.M. Peters and J.H.M. Steenbrink.
\newblock {\em Mixed Hodge Structures}.
\newblock Ergebnisse der Mathematik und ihrer Grenzgebiete. 3. Folge / A Series
  of Modern Surveys in Mathematics. Springer Berlin Heidelberg, 2008.

\bibitem[RW96]{Robinson1996TheTR}
A.~Robinson and S.~Whitehouse.
\newblock The tree representation of {$\Sigma_{n + 1}$}.
\newblock {\em Journal of Pure and Applied Algebra}, 111:245--253, 1996.

\bibitem[Uli15]{ulirsch14tropical}
Martin Ulirsch.
\newblock Tropical geometry of moduli spaces of weighted stable curves.
\newblock {\em Journal of the London Mathematical Society}, 92(2):427--450,
  2015.

\bibitem[Vog90]{vogtmann_1990}
Karen Vogtmann.
\newblock Local structure of some {O}ut(${F}_n$)-complexes.
\newblock {\em Proceedings of the Edinburgh Mathematical Society},
  33(3):367–379, 1990.

\end{thebibliography}
\end{document}